
\documentclass[12pt]{amsart}
\pagestyle{plain}

\usepackage{latexsym,amssymb,enumerate,amsmath}
\newtheorem*{theoremA}{Theorem A}
\newtheorem*{theoremA'}{Theorem A'}
\newtheorem*{theoremA"}{Theorem A"}
\newtheorem*{theoremB}{Theorem B}
\newtheorem*{theoremC}{Theorem C}
\newtheorem*{corD}{Corollary D}
\newtheorem*{corE}{Corollary E}
\newtheorem*{corF}{Corollary F}
\newtheorem*{conjecture}{Conjecture}

\newtheorem{theorem}{Theorem}[section]

\newtheorem{lemma}[theorem]{Lemma}
\newtheorem{proposition}[theorem]{Proposition}
\newtheorem{example}[theorem]{Example}

\def\skipa{\vspace{-1.5mm} & \vspace{-1.5mm} & \vspace{-1.5mm}\\}

\def\gen#1{\langle#1\rangle}

\def\F{{\mathbb F}}
\def\OO{\mathrm{O}}
\def\ve{\varepsilon}

\def\pd#1#2{{\rm ppd}(#1, #2)}

\def\GL{{\mathrm{GL}}}
\def\SU{{\mathrm{SU}}}

\def\SL{{\mathrm{SL}}}
\def\PGL{{\mathrm{PGL}}}
\def\Sp{{\mathrm{Sp}}}
\def\Spin{{\mathrm{Spin}}}

\def\F{{\mathbb{F}}}
\def\G{{\mathbb{G}}}
\def\T{{\mathbb{T}}}

\begin{document}

\title{A new solvability criterion for finite groups}

\author[S. Dolfi]{Silvio Dolfi}
\address{Silvio Dolfi, Dipartimento di Matematica U. Dini,\newline
Universit\`a degli Studi di Firenze, viale Morgagni 67/a,
50134 Firenze, Italy.}
\email{dolfi@math.unifi.it}

\author[R. Guralnick]{Robert M. Guralnick}
\address{Robert Guralnick, Department of Mathematics \newline
University of Southern California, Los Angeles, CA 0089-2532, USA}
\email{guralnic@usc.edu}

\author[M. Herzog]{Marcel Herzog}
\address{Marcel Herzog, Department of Mathematics,\newline
Raymond and Beverly Sackler Faculty of Exact Sciences,\newline 
Tel Aviv University,  Tel Aviv, Israel.}
\email{herzogm@post.tau.ac.il}

\author[C. Praeger]{Cheryl E. Praeger}
\address{Cheryl E. Praeger, Centre for the Mathematics of Symmetry and Computation\\
School of Mathematics and Statistics,\newline
The University of Western Australia,
35 Stirling Highway, Crawley, WA 6009, Australia}
\email{cheryl.praeger@uwa.edu.au}

\thanks{The first author is grateful to the School of Mathematics and Statistics of the
University of Western Australia for its hospitality and support, while the investigation
was carried out. He was partially supported by the
MIUR project ``Teoria dei gruppi e applicazioni''.  The second author was
supported by NSF grant DMS 1001962.  The fourth author was supported by 
Federation Fellowship FF0776186 of the Australian Research Council.}

\begin{abstract}
In 1968, John Thompson proved that a finite group $G$ is solvable if and only 
if every $2$-generator subgroup of $G$ is solvable. In this paper, we prove 
that solvability of a finite group $G$ is guaranteed by a seemingly weaker 
condition: $G$ is solvable if, for all conjugacy classes $C$ 
and $D$ of $G$ consisting of elements of prime power order, \emph{there exist} $x\in C$ and $y\in D$ for which $\gen{x,y}$ is solvable. We also prove 
the following property of finite nonabelian simple groups, which is the key tool for our proof of the 
solvability criterion: if $G$ is a finite nonabelian simple group, then there exist two prime divisors $a$ and $b$ of $|G|$ such that, for all elements $x,y\in G$ with $|x|=a$ and $|y|=b$, the subgroup
$\gen{x,y}$ is  not solvable. Further, using a recent result of Guralnick and 
Malle, we obtain a similar membership criterion 
for any family of finite groups closed under forming subgroups,
quotients and extensions.
\end{abstract}
\subjclass[2000]{20D10, 20F16}
\keywords{Solvable groups, finite simple groups}
\maketitle

\section{Introduction}

John G. Thompson's famous ``N-group paper'' ~\cite{T} of 1968  included the following important solvability criterion for finite groups:
\begin{quote}
\emph{A finite group is solvable if and only if every pair of its elements generates a solvable group.}
\end{quote}
P. Flavell~\cite{F} gave a relatively simple  proof of Thompson's result   in 1995.
We prove that solvability of finite groups is guaranteed by 
a seemingly weaker condition than the solvability of all its $2$-generator
subgroups. 

\begin{theoremA}   Let $G$ be a finite group.  The following are equivalent:
\begin{enumerate}
\item $G$ is solvable;
\item  For all  $x,y \in G$, there
exists an element $g \in G$ for which $\gen{x,y^g}$ is solvable; and
\item  For all  $x,y \in G$ of prime power order, there
exists an element $g \in G$ for which $\gen{x,y^g}$ is solvable.
\end{enumerate}
\end{theoremA}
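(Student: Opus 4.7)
The implications (1) $\Rightarrow$ (2) $\Rightarrow$ (3) are immediate, since subgroups of solvable groups are solvable and (3) restricts (2) to pairs of prime-power-order elements. The real content is (3) $\Rightarrow$ (1), which I would prove by a minimal-counterexample argument invoking the simple-group statement from the abstract as a black box.

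First I would verify that (3) passes to quotients: any prime-power-order element of $G/N$ admits a prime-power-order preimage in $G$ (start from an arbitrary preimage of order $p^b m$ with $(p,m)=1$ and replace it by a suitable power so that the result has $p$-power order and still projects to the given element), and then the condition lifts from $G$ and descends to $G/N$. Next, let $G$ be a nonsolvable group of minimal order satisfying (3). Every proper quotient of $G$ still satisfies (3) and is strictly smaller, hence is solvable by minimality. A standard subdirect-product argument (using $G \hookrightarrow G/N_1 \times G/N_2$ when $N_1 \cap N_2 = 1$) then rules out two distinct minimal normal subgroups, so $G$ has a unique minimal normal subgroup $N$. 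Since $G/N$ is solvable while $G$ is not, $N$ must be nonsolvable, and therefore $N = S_1 \times \cdots \times S_k$ where each $S_i \cong S$ is a finite nonabelian simple group; moreover $G$ permutes the $S_i$ transitively by conjugation.

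Now I would apply the key simple-group result quoted in the abstract to $S$, obtaining primes $a, b$ dividing $|S|$ such that $\gen{u, v}$ is nonsolvable whenever $u, v \in S$ have orders $a$ and $b$, respectively. Choose $x_i \in S_i$ of order $a$ and $y_i \in S_i$ of order $b$, and set $x = (x_1, \ldots, x_k)$ and $y = (y_1, \ldots, y_k)$ in $N$; both are elements of $G$ of prime-power order ($a$ and $b$, respectively). For any $g \in G$, the component of $y^g$ in $S_1$ is a conjugate inside $S_1$ of some $y_j$ via the isomorphism induced by $g$, so it still has order $b$; consequently the projection of $\gen{x, y^g}$ onto $S_1$ is generated by an order-$a$ element and an order-$b$ element of $S$ and is therefore nonsolvable by the key result. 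Hence $\gen{x, y^g}$ itself is nonsolvable for every $g \in G$, contradicting (3) for this particular pair $x, y$. The principal obstacle throughout is of course not this reduction, which is routine once the simple-group result is granted, but the simple-group result itself.
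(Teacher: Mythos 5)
Your proof is correct and follows essentially the same route as the paper: minimal counterexample, lifting prime-power-order elements to show condition (3) passes to quotients, then applying the simple-group result (Theorem B) to the factors of a nonsolvable minimal normal subgroup $N$ and obtaining the contradiction by projecting $\gen{x,y^g}\leq N$ onto a simple factor. The subdirect-product step securing a unique minimal normal subgroup is harmless but superfluous, since any minimal normal subgroup is already forced to be nonsolvable once $G/N$ is solvable and $G$ is not.
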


Theorem~A  can be rephrased as the following essentially equivalent result.

\begin{theoremA'}
Let $G$ be a finite group such that, for all distinct conjugacy classes 
$C$ and $D$ of $G$  consisting of elements of prime power order, 
there exist $x\in C$ and $y\in D$ for which
$\gen{x, y}$ is solvable. Then $G$ is solvable.
\end{theoremA'}

Our second main result, which is the key tool for proving Theorem~A,
deals with the nonsolvability of certain $2$-generator subgroups 
of finite nonabelian simple groups.   Using the classification
of finite simple groups, we prove the following theorem.

\begin{theoremB}
Let $G$ be a finite nonabelian simple group. Then there exist
distinct prime divisors 
$a,b$ of $|G|$ such that,  for all $x,y\in G$ with
$|x|=a$, $|y|= b$, the subgroup $\gen{x,y}$ is nonsolvable.
\end{theoremB}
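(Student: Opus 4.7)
The plan is to argue by the classification of finite simple groups, handling the alternating and sporadic families first and leaving the groups of Lie type as the substantive case. For the alternating group $A_n$ with $n\geq 5$, Bertrand's postulate (with direct verification for small $n$) produces two distinct primes $a,b$ with $n/2<a<b\leq n$. An element of $A_n$ of prime order $>n/2$ is a single cycle of that length, so a subgroup of $A_n$ containing both an order-$a$ and an order-$b$ element is transitive on at least $b>n/2$ points with order divisible by $ab$; by classical results on primitive permutation groups of prime degree this forces the subgroup to be $A_n$ itself, which is nonsolvable. For the sporadic simple groups (together with the Tits group ${}^2F_4(2)'$), the claim is verified directly from the Atlas: one chooses two primes $a,b$ dividing $|G|$, typically among the largest, and reads off from the maximal-subgroup list that no maximal subgroup of $G$ has order divisible by both.

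The heart of the proof is the case of the finite simple groups of Lie type. Let $G$ be such a group in characteristic $p$ defined over $\F_q$. The idea is to choose $a$ and $b$ to be primitive prime divisors (ppd's) of $q^{d_1}-1$ and $q^{d_2}-1$ for two distinct integers $d_1,d_2$ taken from the list of exponents appearing in the factorisation of $|G|$ into cyclotomic polynomials $\Phi_d(q)$, typically the two largest such $d$. By Zsygmondy's theorem these primes exist except for a short explicit list of $(q,d)$, and one can usually arrange them to be ``big'' ppd's in the sense of $\bpd{q}{d}$, i.e.\ to exceed the associated $d$. An element of $G$ of such order is semisimple and lies in a unique conjugacy class of maximal tori associated to $\Phi_{d_i}(q)$. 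The classification of overgroups of big-ppd elements in classical groups due to Guralnick--Penttila--Praeger--Saxl, together with the analogous results for exceptional groups, then restricts any subgroup $H\leq G$ meeting both classes to a short list, each entry of which contains a quasisimple Lie-type section and is therefore nonsolvable.

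The main obstacle is the accumulation of small cases. The generic ppd argument must be supplemented by direct analysis whenever Zsygmondy's theorem fails, whenever the rank of $G$ is too small for the big-ppd subgroup classification to apply, and for low-rank families such as $\PSL_2(q)$, $\PSL_3(q)$, $\PSU_3(q)$, $\Sp_4(q)$, ${}^2B_2(q)$, ${}^2G_2(q)$ and $G_2(q)$, where one falls back on the classical descriptions of maximal subgroups (Dickson's theorem for $\PSL_2(q)$ and analogous lists elsewhere). Moreover, the choice of $(d_1,d_2)$ must be tailored separately to each of the families $\PSL_n(q)$, $\PSU_n(q)$, $\PSp_{2n}(q)$, $\POm^{\pm}_{2n}(q)$, $\POm_{2n+1}(q)$ and each exceptional type with its twisted variants, and the bookkeeping — verifying in every case that the chosen pair of primes lies in no common solvable subgroup — is the real technical burden of the argument.
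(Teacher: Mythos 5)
Your broad outline (CFSG case division, two large primitive prime divisors, Zsigmondy exceptions and small-rank cases handled separately) matches the paper's, but several of your concrete steps would fail as stated. For $A_n$: with $n/2<a<b\le n$ the subgroup $\gen{x,y}$ need not be transitive on all $n$ points, its long orbit need not have prime length, and the conclusion $\gen{x,y}=A_n$ is simply false for composite $n$ (e.g.\ in $A_8$ a $5$-cycle and a $7$-cycle can both fix the point $8$); the paper itself points this out, and only proves $\gen{x,y}\cong A_d$ for some $d\ge b$, via a primitivity argument plus Jordan's theorem, with ad hoc treatment of small $n$. For the sporadic groups, your criterion ``no maximal subgroup has order divisible by both primes'' cannot be met at all for $M_{24}$, since the maximal subgroup $M_{23}$ has the same set of prime divisors as $M_{24}$; what one actually needs (and what the paper checks) is the weaker statement that no \emph{solvable} subgroup has order divisible by $ab$, which follows from Atlas element-order and Sylow data alone. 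For the groups of Lie type, your appeal to the classification of overgroups of big-ppd elements is a legitimate but much heavier route, and the blanket claim that every subgroup on those lists ``contains a quasisimple Lie-type section'' is not true: the lists include solvable overgroups such as normalizers of Singer-type tori and other field-extension subgroups, so one would still have to argue separately that no solvable member contains elements of both prime orders.

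The paper avoids all of this machinery with a more elementary mechanism that your proposal is missing: choose the two primes $r_1,r_2$ so that $G$ has no element of order $r_1r_2$, the relevant Sylow subgroups are cyclic (or small), and the automizer of a cyclic subgroup of order $r_i$ has order coprime to the other prime (bounded via the torus normalizer/Weyl group, or via the Borel--Tits parabolic argument in characteristic $p$). Then a minimal solvable $\{r_1,r_2\}$-subgroup, which by a Hall/minimal-normal-subgroup reduction would have to contain a subgroup of order $r_1r_2^a$ or $r_1^ar_2$, cannot exist. This replaces both your maximal-subgroup lookups and the GPPS-type classification, and it is exactly the tool you would need to repair the $M_{24}$-type and torus-normalizer obstructions above; as it stands, your proposal has genuine gaps at those points.
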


In an earlier version of this paper by three of the authors \cite{DHP}, 
Theorem B was proved
with the assumption of $a$ and $b$ prime replaced by the assumption that they be orders of elements of $G$ (and the result  with primes was conjectured).  
This weaker version of Theorem B led in \cite{DHP} to a proof of the equivalence of conditions (1) and (2) of Theorem A, and to a proof of Theorem A' where 
$C$ and $D$ are arbitrary conjugacy classes.  

Various results about nonabelian simple groups produce generating 
element pairs. However, in Theorem B we cannot in general choose 
primes $a,b$ such that the nonsolvable  subgroups $\gen{x,y}$ 
are all equal to $G$: for example, 
for the alternating group $A_n$, where $n > 5$ and $n$ is not prime, 
it is easy to see that for any primes $a,b$ less than $n$, 
there exist $x, y \in A_n$ with $|x|=a$, $|y|=b$ and $\gen{x,y} \ne A_n$.
On the other hand, for many of the finite simple groups $G$ of Lie type,
we can choose primes $a$ and $b$ such that $G=\gen{x,y}$
for any $x,y \in G$ of orders $a$ and $b$ respectively.   
We discuss this property in the final
section and show that there are also infinitely 
many finite simple groups of Lie type for which no choice of primes $a,b$ 
gives the stronger ``generation result''.

Using a  recent result of Guralnick and Malle \cite[Theorem 1.2]{GM2}
together with the methods used to derive Theorem~A from
Theorem~B, 
we can prove a stronger version of Theorem~A. 

\begin{theoremC}
Let $\mathcal{X}$ be a family of finite groups which is closed under taking
 subgroups and quotient groups, and forming extensions.  Then a finite group $G$
is in $\mathcal{X}$ if and only if, for every pair of conjugacy classes
$C$ and $D$ of $G$, there exist $x \in C$ and $y\in D$ for which
$\gen{x,y}\in\mathcal{X}$.
\end{theoremC}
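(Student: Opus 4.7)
The forward implication is immediate: if $G\in\mathcal{X}$ then every $\gen{x,y}\le G$ lies in $\mathcal{X}$ by closure under subgroups. For the converse I would argue by contradiction, taking $G$ to be a counterexample of smallest order. Every quotient $G/N$ then inherits the class hypothesis, because any pair of $G/N$-conjugacy classes is the image of a pair of $G$-conjugacy classes, and the associated $\gen{\bar x,\bar y}$ is a quotient of some $\gen{x,y}\in\mathcal{X}$; by minimality of $G$, every proper quotient of $G$ lies in $\mathcal{X}$.

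Next, if $G$ had two distinct minimal normal subgroups $N_1,N_2$, then $N_1\cap N_2=1$, so the diagonal map embeds $G$ into $G/N_1\times G/N_2$. The product is an extension of $G/N_1\in\mathcal{X}$ by $G/N_2\in\mathcal{X}$, hence lies in $\mathcal{X}$, and subgroup closure then forces $G\in\mathcal{X}$, contradicting the choice of $G$. Thus $G$ has a \emph{unique} minimal normal subgroup $N$, and since $G/N\in\mathcal{X}$ while $G\notin\mathcal{X}$, extension closure gives $N\notin\mathcal{X}$. Now specialize the hypothesis to $D=\{1\}$: for every class $C$ there is some $x\in C$ with $\gen{x}\in\mathcal{X}$, hence (by isomorphism-invariance of the closed family) $\gen{x}\in\mathcal{X}$ for \emph{all} $x\in G$. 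Taking $x$ of prime order $p$ via Cauchy's theorem shows $\mathbb{Z}/p\mathbb{Z}\in\mathcal{X}$ for every prime $p\mid|G|$; iterating extensions then places every finite $p$-group with $p\mid|G|$ in $\mathcal{X}$. Consequently $N$ cannot be abelian, so $N\cong S^k$ for some finite nonabelian simple $S$, and $S\notin\mathcal{X}$ as well, for otherwise iterated extensions would yield $S^k=N\in\mathcal{X}$.

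At this point I would invoke \cite[Theorem~1.2]{GM2}, the Guralnick--Malle strengthening of Theorem~B, in precisely the role that Theorem~B plays in deriving Theorem~A. Combined with the transfer techniques used there---embedding $S$ into $N=S^k\nor G$ and exploiting the faithful action of $G$ on $N$ (note that $\cent{G}{N}$ must be trivial, since it is normal and meets $N$ in $\zent{N}=1$, so uniqueness of $N$ forces $\cent{G}{N}=1$ and $G\hookrightarrow\aut{N}$)---this should produce a pair of conjugacy classes $C,D$ of $G$ such that, for every $x\in C$ and $y\in D$, the subgroup $\gen{x,y}$ admits $S$ as a composition factor. Because $\mathcal{X}$ is closed under subgroups and quotients, every composition factor of any $\mathcal{X}$-group itself belongs to $\mathcal{X}$; combined with $S\notin\mathcal{X}$, this forces $\gen{x,y}\notin\mathcal{X}$ for \emph{every} $x\in C$ and $y\in D$, flatly contradicting the standing hypothesis on $G$.

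The main obstacle is the last step: transporting the Guralnick--Malle class pair from $S$ up to honest $G$-classes in $N$ for which every conjugate pair still has $S$ as a section. When $k>1$, the group $G$ may permute the $k$ simple direct factors of $N$ transitively, so a $G$-class in $N$ is typically strictly larger than an $S$-class in a single factor, and one has to choose $C,D$ carefully---using \cite[Theorem~1.2]{GM2} in its full generality for diagonal-type and almost simple extensions---so that no $G$-conjugation can move a pair to one whose generated subgroup avoids $S$ in its composition series. Everything else in the argument is the structural reduction via the three closure axioms outlined above, with the quotient/extension interplay playing exactly the same bookkeeping role as in the derivation of Theorem~A from Theorem~B.
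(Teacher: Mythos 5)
Your structural reduction is essentially the paper's: forward direction by subgroup closure, minimal counterexample, quotients inherit the hypothesis so $G/N\in\mathcal{X}$ and hence $N\notin\mathcal{X}$ by extension closure, the abelian case killed by putting cyclic groups (hence all relevant $p$-groups) into $\mathcal{X}$, leaving $N\cong L^t$ with $L$ nonabelian simple and $L\notin\mathcal{X}$. (Your uniqueness-of-$N$ and $\cent{G}{N}=1$ observations are correct but not needed.) The genuine gap is exactly the step you flag as ``the main obstacle'': you never construct the two $G$-conjugacy classes, and the plan you sketch for doing so would not work as stated. You propose to transport the Guralnick--Malle class pair ``up to honest $G$-classes in $N$'' and to argue that every pair has $L$ as a composition factor; but the Guralnick--Malle theorem (as used in the paper, \cite[Theorem 7.1]{GM2}) only guarantees, for an almost simple group with socle $S$, classes $C\subset S$ and $D$ \emph{arbitrary} with $\gen{x,y}\ge S$ for all pairs --- the second class cannot in general be taken inside the socle, so in general you cannot take both of your $G$-classes inside $N$. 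Moreover, imitating the Theorem~A argument does not suffice: Theorem~B is a statement about \emph{all} elements of two fixed prime orders, which is automatically stable under $G$-conjugation and under projection to any simple factor, whereas here you are handed specific classes, and $G$-conjugation both permutes the $t$ factors and twists components by automorphisms induced from $\norm{G}{L_1}$; noting that $G$ acts faithfully on $N$ does nothing to control this fusion.

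The paper's resolution, which is the actual content of the proof, is to apply Guralnick--Malle not to $L$ but to the almost simple group $\bar H=\norm{G}{L_1}/\cent{G}{L_1}$, obtaining $\bar H$-classes $\bar C_1\subset\bar L_1$ and $\bar D_1$ with $\gen{\bar x,\bar y}\ge\bar L_1$ for every pair. One then takes an $H$-class $C_1\subseteq L_1$ lifting $\bar C_1$, defines $C$ to be the $G$-class of elements $\prod_i c_i^{u_i}$ (one component from $C_1$ in each factor, $u_i$ coset representatives of $H=\norm{G}{L_1}$ in $G$), and takes $D=d^G$ for a lift $d\in H$ of an element of $\bar D_1$ --- note $D$ need not lie in $N$. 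Given any $(x,y)\in C\times D$, one conjugates so that $y=d\in H$; then $x\in N\le H$ has $L_1$-component in $C_1$, and projecting $\gen{x,y}\le H$ to $\bar H$ yields a subgroup containing $\bar L_1\cong L\notin\mathcal{X}$, so $\gen{x,y}\notin\mathcal{X}$ by closure under subgroups and quotients, contradicting the hypothesis. Using $\bar H$-classes (rather than $L$-classes) is what makes the argument immune to the outer automorphisms and the permutation of factors that you correctly identified as the difficulty; without this device your outline stops short of a proof.
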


Using standard reduction techniques, we have the following easy corollary 
of Theorem~A for \emph{linear groups}, that is, subgroups of $\GL(n,K)$ for 
some $n$ and field $K$.

\begin{corD}  \label{linear}  Let $G$ be a finitely generated linear group.  Then $G$ is solvable if and only
if, for all $x, y \in G$, there exists $g \in G$ such that $\gen{x,y^g}$
is solvable.
\end{corD}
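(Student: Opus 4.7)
The plan is a short reduction to the finite case handled by Theorem~A, using residual finiteness of finitely generated linear groups together with Zassenhaus' bound on the derived length of solvable linear groups.

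The forward direction is immediate: if $G$ is solvable, then $\gen{x,y^g}$ is solvable for every $g$ (for instance $g=1$). For the converse, suppose $G\le\GL(n,K)$ is finitely generated and satisfies condition (2) of Theorem~A. First I would observe that (2) is inherited by homomorphic images: given $\bar x,\bar y$ in $G/N$ with $N\nor G$, lift them to $x,y\in G$, pick $g$ with $\gen{x,y^g}$ solvable, and note that $\gen{\bar x,\bar y^{\bar g}}$ is the image of this solvable group under the quotient map. Applying Theorem~A to each finite quotient $G/N$ then yields that \emph{every} finite quotient of $G$ is solvable.

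Next I would invoke Mal'cev's theorem. After replacing $K$ by the subfield generated by the matrix entries of a finite generating set of $G$, I may assume $G\le\GL(n,R)$ for some finitely generated integral domain $R$, and the reduction maps $G\to\GL(n,R/\mathfrak{m})$, as $\mathfrak{m}$ ranges over the maximal ideals of $R$ (each having finite residue field, since $R$ is a Jacobson ring), separate points of $G$. This produces an embedding of $G$ into the direct product of the images $G_\mathfrak{m}\le\GL(n,R/\mathfrak{m})$. Each $G_\mathfrak{m}$ is a finite quotient of $G$, hence solvable by the previous step; by Zassenhaus' theorem the derived length of a solvable subgroup of $\GL(n,F)$, for any field $F$, is bounded by a constant $d(n)$ depending only on $n$. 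Consequently every $G_\mathfrak{m}$ has derived length at most $d(n)$, and so does $G$.

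The main delicacy is the second step: bare residual finiteness of $G$ would not suffice, since arbitrary finite quotients of a finitely generated group need not have bounded derived length. One really needs the approximating quotients to be linear of the fixed degree $n$, so that Zassenhaus' uniform bound can be invoked; Mal'cev's theorem provides exactly such approximations.
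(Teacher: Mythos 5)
Your proof is correct, and it rests on the same two pillars as the paper's: Theorem A applied to finite quotients that are still linear of degree $n$, and the uniform bound on the derived length of solvable subgroups of $\GL_n$ over any field (Lie--Kolchin--Mal'cev in the paper, Zassenhaus in your write-up; same fact). The route differs in how the finite quotients are produced. You invoke Mal'cev's residual finiteness theorem in full: the Jacobson property of the finitely generated domain $R$ separates all points, so $G$ embeds into the product of all reductions $G_{\mathfrak m}\le \GL(n,R/\mathfrak m)$, each finite, solvable by Theorem A, and of derived length at most $d(n)$. The paper avoids quoting residual finiteness: assuming $G$ not solvable, it fixes a nontrivial element $w$ in the $(f(n)+1)$th term of the derived series and builds $R$ to contain, besides the entries of the generators, the inverses of the nonzero entries of $w-I$; then a \emph{single} maximal ideal $M$ (with $R/M$ finite by the Nullstellensatz) yields one finite linear quotient in which $w$ survives, contradicting the derived-length bound for the solvable image. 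So the paper's argument is marginally more self-contained (one reduction, no separation of points needed), while yours packages the same idea via a standard named theorem; your closing remark correctly identifies why bare residual finiteness would not suffice and why the quotients must stay linear of fixed degree. One small point to tidy: the subring $R$ should also contain the entries of the inverses of the generators (or the inverses of their determinants), so that every element of $G$, not just the generators, has its entries and the entries of its inverse in $R$; the paper's phrase ``a finitely generated subring containing all the matrix entries of the generators'' needs the same reading.
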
 

Note that the finite generation hypothesis cannot be removed.  Suppose that $G$ is a simple algebraic 
group (and so in particular a linear group).  If  $x, y \in G$, then there exists $g \in G$ with  $\langle x, y^g \rangle$
contained in a Borel subgroup (and so solvable).   Indeed, if we take $G$ be a simple compact
Lie group, then any element is contained in a maximal torus and so given
$x, y \in G$, $x$ and $y^g$ will commute for some $g \in G$. 

Theorem A can also be used to give in Corollary E a characterization of finite nilpotent groups, and our proof depends on the finite simple group classification, since Theorem A does.  It would be interesting
to see if Corollary E could be proved without the classification of finite simple groups.  One can also deduce Corollary E
from the result of Fein, Kantor and Schacher \cite{FKS} that in any transitive action of a finite group, there
exists a fixed point free element of prime power order. (However this theorem in \cite{FKS} is actually more difficult to prove than Theorem A.) 

\begin{corE}  \label{nilpotent}   Let $G$ be a finite group.  Then $G$ is nilpotent if and only if for every pair of distinct primes 
$p$ and $q$ and for every pair of elements $x,y \in G$ with $x$ a $p$-element and $y$ a $q$-element, $x$ and $y^g$
commute for some $g \in G$.
\end{corE}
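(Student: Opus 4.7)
The forward direction is immediate: in a finite nilpotent group the Sylow subgroups are direct factors, so any $p$-element and $q$-element with $p \ne q$ already commute, and one may take $g = 1$. For the converse, the plan is first to apply Theorem~A to reduce to the solvable case, then to run a minimal-counterexample argument culminating in the Fein--Kantor--Schacher theorem. To verify condition~(3) of Theorem~A for $x, y \in G$ of prime power order, I split into two cases: if $|x|$ and $|y|$ are powers of distinct primes, the hypothesis of Corollary~E directly supplies $g$ with $\gen{x, y^g}$ abelian; if both are powers of the same prime $p$, Sylow's theorem supplies $g$ placing $x$ and $y^g$ in a common Sylow $p$-subgroup, so $\gen{x, y^g}$ is a $p$-group. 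Thus $G$ is solvable.

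Now suppose for contradiction $G$ is a counterexample of minimum order. The hypothesis passes to quotients: a $p$-element of $G/N$ lifts to its $p$-part in any preimage in $G$ (similarly for $q$-elements), so the hypothesis in $G$ produces the required image $\bar g$ in $G/N$. Hence every proper quotient of $G$ is nilpotent, so $G$ has a unique minimal normal subgroup $N$ (else $G$ embeds in the nilpotent product $G/N_1 \times G/N_2$), and $N$ is elementary abelian of some prime order $p$. Uniqueness of $N$ forces $\oh{r}{G} = 1$ for every $r \ne p$ (otherwise $\zent{\oh{r}{G}}$ would be a normal subgroup containing $N$, impossible), so $\fit{G} = \oh{p}{G}$. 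Pulling back the normal Sylow $p$-subgroup of the nilpotent quotient $G/N$ identifies $\fit{G}$ with a normal Sylow $p$-subgroup $P$ of $G$, and Schur--Zassenhaus gives $G = P \rtimes H$ with $H \cong G/P$ nilpotent of $p'$-order. Finally, if $\frat{P} \ne 1$ then $G/\frat{P}$ is a proper, hence nilpotent, quotient, forcing $[P, H] \subseteq \frat{P}$; combined with the coprime-action identity $P = \cent{P}{H}\cdot[P, H]$, this gives $P = \cent{P}{H}$ and $G$ nilpotent, a contradiction. So $P$ is elementary abelian.

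Since $G$ is not nilpotent, $H$ acts nontrivially on $P$, and there is $x \in P$ with $|x^H| > 1$. Applying the Fein--Kantor--Schacher theorem \cite{FKS} to the transitive $H$-action on $x^H$ yields an element $h_0 \in H$ of prime power order acting fixed-point-freely; since $H$ is a $p'$-group, $h_0$ is a $q$-element for some prime $q \ne p$. The hypothesis of Corollary~E applied to the $p$-element $x$ and the $q$-element $h_0$ produces $g \in G$ with $[x, h_0^g] = 1$. Writing $g = p_0 h_0'$ with $p_0 \in P$ and $h_0' \in H$, and using that $P$ is normal abelian, a short computation in the semidirect product yields $h_0^g = r \cdot h_0^{h_0'}$ with $r \in P$; since $P$ is abelian the commutator relation collapses to $[x, h_0^{h_0'}] = 1$, so the $H$-conjugate $h_0^{h_0'}$ stabilizes $x$. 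This contradicts the fixed-point-freeness of $h_0$ on $x^H$, whose point stabilizers are precisely the $H$-conjugates of the stabilizer of $x$. The crux of the argument is this final conversion of a $G$-conjugate of $h_0$ into an $H$-conjugate; it is what allows the hypothesis of Corollary~E to collide with the Fein--Kantor--Schacher fixed-point-free element and produce the contradiction.
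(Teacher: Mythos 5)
Your argument is correct, but it is a hybrid that is heavier than either of the two proofs the paper gives for Corollary E. The paper's first proof opens exactly as you do (condition (3) of Theorem~A verified via Sylow's theorem for equal primes and the hypothesis for distinct primes; induction on $|G|$; a minimal normal subgroup $N$ elementary abelian of order a power of $p$ with $G/N$ nilpotent), but then finishes in a few lines with no Fein--Kantor--Schacher and none of your structure theory: it picks $z$ of prime power order whose image generates a central subgroup of prime order $q$ in $G/\cent G N$, observes that $\cent N {z^g}=\cent N z=1$ for \emph{every} $g$ (since $z^g\in z\,\cent G N$), and this already contradicts the hypothesis applied to $z$ and an element $x\in N$ of order $p$. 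The paper's second proof does use \cite{FKS}, but far more directly than you do and without Theorem~A at all: take $z\in\zent P$ for $P$ a Sylow $p$-subgroup and $C=\cent G z$; if $C=G$ induct on $G/\gen z$, and otherwise FKS applied to the transitive action of $G$ on the cosets of $C$ yields an element $y$ of prime power order none of whose conjugates lies in $C$, which forces $y$ to be a $q$-element with $q\ne p$ (as $z\in \zent P$) and contradicts the hypothesis outright. In both of the paper's proofs the conjugating element built into the hypothesis is neutralized cheaply --- by making $\cent N z$ conjugation-invariant, respectively by reading fixed-point-freeness on the cosets of $C$ as ``no conjugate of $y$ centralizes $z$'' --- whereas you pay for it with Schur--Zassenhaus plus the Frattini/coprime-action argument forcing $P$ abelian, which is exactly what lets you convert the $G$-conjugate $h_0^{g}$ into an $H$-conjugate and collide with fixed-point-freeness on the orbit $x^H$. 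That conversion, and your application of FKS to the (possibly unfaithful) action of the complement $H$ on an orbit inside $P$, are sound --- for the faithfulness point one may replace $h_0$ by the $q$-part of a preimage of the fixed-point-free permutation, or simply invoke FKS in the form quoted in the paper for arbitrary transitive actions --- so your proof stands; it just reaches the same contradiction with noticeably more machinery than either route the paper takes.
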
 

We can restate Theorem A in an analogous manner:

\begin{corF}  \label{solvable2}   Let $G$ be a finite group.  Then $G$ is solvable if and only if for every pair of distinct primes 
$p$ and $q$ and for every pair of elements $x,y \in G$ with $x$ a $p$-element and $y$ a $q$-element,  $\gen{x,y^g}$
is a $\{p,q\}$-group   for some $g \in G$.
\end{corF}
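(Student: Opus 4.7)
The plan is to handle the two directions of the corollary separately; the converse is the substantive part, and the strategy there is to reduce the hypothesis to condition~(3) of Theorem~A and then quote Theorem~A.

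For the ``only if'' direction, assume $G$ is solvable and let $x$ be a $p$-element and $y$ a $q$-element with $p\ne q$. I would invoke P.~Hall's theorem for solvable groups: Hall $\{p,q\}$-subgroups of $G$ exist and any two are $G$-conjugate. Since $\gen{x}$ and $\gen{y}$ are $\{p,q\}$-subgroups, they lie in Hall $\{p,q\}$-subgroups $H$ and $H'$, respectively. Choosing $g\in G$ with $(H')^g=H$ puts $y^g$ into $H$, and hence $\gen{x,y^g}\le H$ is a $\{p,q\}$-group, as required.

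For the converse, let $x,y\in G$ have prime power orders, say $|x|$ a power of $p$ and $|y|$ a power of $q$. My plan is to exhibit, in each case, an element $g\in G$ for which $\gen{x,y^g}$ is solvable, thereby verifying condition~(3) of Theorem~A and concluding that $G$ is solvable by appeal to that theorem. If $p\ne q$, the hypothesis of Corollary~F furnishes $g$ with $\gen{x,y^g}$ a $\{p,q\}$-group; by Burnside's classical $p^aq^b$ theorem, every $\{p,q\}$-group is solvable. If $p=q$, the hypothesis is not needed at all: both $x$ and $y$ are $p$-elements, and by Sylow's theorem all Sylow $p$-subgroups of $G$ are conjugate, so one may choose $g$ with $y^g$ lying in a Sylow $p$-subgroup $P$ containing $x$; then $\gen{x,y^g}\le P$ is a $p$-group and certainly solvable.

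No serious obstacle is anticipated. The corollary is essentially a repackaging of Theorem~A: Burnside's $p^aq^b$ theorem is what allows us to strengthen ``solvable $2$-generator subgroup'' to ``$\{p,q\}$-group'' for the distinct-prime case, while Hall's theorem is the corresponding converse that produces a $\{p,q\}$-group from solvability. All of the deep content has been absorbed into Theorem~A (and thence into Theorem~B).
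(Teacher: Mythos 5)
Your proof is correct and uses essentially the same ingredients as the paper: Hall's theorem for the forward direction, and Theorem~A together with Burnside's $p^aq^b$ theorem (plus Sylow for the equal-prime case) for the converse. The only cosmetic difference is that you verify condition~(3) of Theorem~A directly, whereas the paper argues contrapositively, noting that a nonsolvable $\gen{x,y^g}$ has order divisible by at least three primes; the content is identical.
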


\par
We discuss various other generalizations of Thompson's theorem in the next section. 
We prove 
Theorem~B for alternating groups  
and sporadic groups in Section \ref{sec:altspor} and for the groups of Lie type in
Section \ref{sec:Lie}. We conclude the latter section with the proof of Theore~B.  
In Section \ref{sec:ABCD}, we deduce Theorems A and C and Corollaries D, E and F.        In
the final section, we give some examples and remarks.

We note that all the main results depend upon the classification of finite simple groups.
However, the proofs of Theorem B for the known simple groups do not use the classification.
This is in contrast to Theorem C where we need detailed information about the maximal
subgroups of the finite groups of Lie type (which also depends upon the classification).

\section{Other generalisations of Thompson's theorem}  \label{sec:thompson} 

Several oth\-er ``Thompson-like'' results have appeared in the literature recently. We mention here four such theorems. 
In the first three results, solvability of all $2$-generator subgroups is replaced by a weaker condition restricting 
the required set of solvable $2$-generator subgroups, in different ways from our generalisation.

In 2000, Wilson and the second author~\cite{GW} obtained a solvability criterion by restricting the proportion of 
$2$-generator subgroups required to be solvable. 

\begin{theorem}
A finite group is solvable if and only if more than $\frac {11}{30}$ of the pairs of elements of $G$      
generate a solvable subgroup.
\end{theorem}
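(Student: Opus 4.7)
The easy direction is immediate: if $G$ is solvable then every $2$-generator subgroup is solvable, so $P(G) := |\{(x,y)\in G^2:\gen{x,y}\text{ solvable}\}|/|G|^2 = 1 > 11/30$. The substance lies in the converse, which amounts to showing $P(G)\le 11/30$ for every finite nonsolvable $G$.

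The first reduction I would make is quotient-theoretic: for any $N\nor G$, $P(G)\le P(G/N)$, because solvability of $\gen{x,y}$ forces solvability of $\gen{xN,yN}$ and each pair in $(G/N)^2$ has exactly $|N|^2$ preimages. Applying this with $N$ equal to the solvable radical, and then taking a minimal counterexample, one reduces to a nonsolvable $G$ with trivial solvable radical, whose socle is a product of nonabelian simple groups. A further series of reductions -- analysing the $G$-action on the factors of the socle and then comparing an almost simple group with its simple socle (for which one must show that outer coset representatives extend most solvable socle pairs to nonsolvable pairs in $G$, essentially a weighted form of Theorem~B) -- reduces to $G$ itself being nonabelian simple. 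A final minimality step then forces every proper subgroup of $G$ to be solvable, i.e., $G$ is a \emph{minimal simple group} in Thompson's sense.

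At this point I would invoke Thompson's classification of minimal simple groups, which lists each as one of $\PSL(2,2^p)$ ($p$ prime), $\PSL(2,3^p)$ ($p$ odd prime), $\PSL(2,p)$ ($p>3$ prime with $p^2\equiv -1\pmod 5$), $\mathrm{Sz}(2^p)$ ($p$ odd prime), or $\PSL(3,3)$. For any minimal simple $G$, a pair $(x,y)$ either generates $G$ or generates a solvable subgroup, so $P(G)=1-d(G)/|G|^2$ where $d(G)$ counts generating pairs, and the target inequality becomes $d(G)/|G|^2\ge 19/30$ with equality only at $\PSL(2,4)\cong A_5$. This is established by direct computation for $A_5$ and $\PSL(3,3)$, and for the three infinite families by M\"obius inversion on the subgroup lattice using Dickson's and Suzuki's descriptions of maximal subgroups.

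The principal obstacle is the sharp bound $d(G)/|G|^2\ge 19/30$ uniformly across the infinite families $\PSL(2,q)$ and $\mathrm{Sz}(2^p)$, together with the identification of $A_5$ as the unique extremal case: extracting precisely the value $19/30$ rather than some weaker constant requires delicate control of the proportion of elements lying in each class of maximal subgroups and of the sizes of their pairwise intersections. A subsidiary but essential hurdle is the reduction step from an almost simple group to its socle: this cannot be effected by a quotient argument and instead requires showing that most elements outside the socle pair with most socle elements to generate nonsolvable subgroups, preventing $P(G)$ from exceeding $P(\mathrm{soc}(G))$ when we pass to the simple layer.
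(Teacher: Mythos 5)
This theorem is not proved in the paper at all: it is quoted from Guralnick--Wilson \cite{GW}, so there is no internal proof to compare against, and your proposal has to stand on its own. It does not, because of one decisive gap: the claimed reduction to minimal simple groups. In a minimal counterexample argument, minimality is with respect to the counterexample property (nonsolvable and solvable-pair proportion exceeding $11/30$); it tells you that smaller \emph{counterexamples} do not exist, not that proper subgroups of $G$ are solvable. The hypothesis ``more than $11/30$ of pairs generate a solvable subgroup'' does not pass to subgroups, so there is no ``final minimality step'' forcing $G$ to be minimal simple in Thompson's sense. Consequently your identity $P(G)=1-d(G)/|G|^2$ fails for the groups you actually need to handle: in a general simple group a pair may generate a proper nonsolvable subgroup (e.g.\ an $A_5$ inside $A_6$, or inside $\PSL(2,11)$), contributing neither to solvable pairs nor to generating pairs. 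The heart of the Guralnick--Wilson theorem is precisely the uniform bound of $11/30$ for \emph{every} finite (almost) simple group, including those with many nonsolvable proper subgroups; this is carried out with the classification and estimates on the proportions of elements and pairs lying in maximal (solvable) subgroups, and Thompson's list of minimal simple groups cannot substitute for it.

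A secondary weakness is the socle/almost-simple step, which you correctly flag as a hurdle but leave entirely unargued: since neither the hypothesis nor the quantity $P(\cdot)$ behaves well under passage to subgroups, getting from a group with trivial solvable radical (socle a product of nonabelian simple groups, possibly permuted by $G$) down to the simple case requires a genuine counting argument over cosets of the socle, of the kind \cite{GW} supplies; a ``weighted form of Theorem B'' is not available off the shelf, since Theorem B of this paper concerns single pairs of prime-order elements, not proportions. Your opening reduction $P(G)\le P(G/N)$ and the use of it to kill the solvable radical are correct, and the direct computations you propose for $A_5$ and the verification that among minimal simple groups $A_5$ is extremal with $P(A_5)=11/30$ are fine as far as they go; but as structured the argument proves the criterion only for groups all of whose nonsolvable sections are minimal simple, which is far short of the theorem.
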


In addition they proved similar results showing that the properties of nilpotency and having odd order are 
also guaranteed if a sufficient proportion of element pairs generate subgroups with these properties, 
namely more than $\frac 12$ for nilpotency, and more than $\frac {11}{30}$ for having odd order.

In contrast to this, in a paper published in 2009, Gordeev, Grune\-wald, Kunyavski\u i and Plotkin \cite{GGKP1} 
proved a solvability criterion which involved $2$-generation within each conjugacy class.
This result was also proved independently     by Guest  \cite[Corollary 1]{G2} (see also \cite{G1} for related results). 

\begin{theorem}
A finite group $G$ is solvable if and only if, for each conjugacy class $C$ of $G$, each pair of elements of $C$ 
generates a solvable subgroup.
\end{theorem}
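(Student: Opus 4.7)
The plan is to prove the nontrivial (``if'') direction by a minimal counterexample argument, reducing to the nonabelian simple case and then invoking a generation result based on the classification. The other direction is trivial, since subgroups of solvable groups are solvable. So let $G$ be a counterexample of smallest order, i.e.\ $G$ is a nonsolvable group satisfying the hypothesis of minimal order.

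The first step is to verify that the hypothesis descends both to subgroups and to quotients. For a subgroup $H \le G$, any pair $x, y$ conjugate in $H$ is also conjugate in $G$, so $\langle x, y\rangle$ is solvable by assumption. For a quotient $G/N$, given $\bar x, \bar y$ with $\bar y = \bar x^{\bar g}$, lift $\bar g$ to some $g \in G$ and choose any preimage $x$ of $\bar x$; then $x$ and $x^g$ are conjugate in $G$, the hypothesis yields that $\langle x, x^g\rangle$ is solvable, and this subgroup surjects onto $\langle \bar x, \bar y\rangle$, which is therefore solvable as well. By minimality of $G$, every proper subgroup and every proper quotient of $G$ must then be solvable.

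Next I would conclude that $G$ is a finite nonabelian simple group. Let $N$ be a minimal normal subgroup of $G$; since $G/N$ is solvable and $G$ is not, $N$ must itself be nonsolvable, and hence $N \cong S^k$ for some nonabelian simple group $S$ and some $k\ge 1$. As $N$ is a nonsolvable normal subgroup of $G$, it cannot be proper (a proper subgroup would be solvable), so $N = G$ is characteristically simple. If $k \ge 2$, then a single factor $S$ would embed in $G$ as a nonsolvable proper subgroup, contradicting minimality; hence $k = 1$ and $G$ is simple.

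The final step is to contradict the standing hypothesis on $G$, and it is here that the real work lies. I would invoke the classification-based theorem of Guralnick and Kantor asserting that every finite nonabelian simple group $G$ contains an element $x$ and an element $g \in G$ with $\langle x, x^g\rangle = G$; this subgroup is certainly nonsolvable, and $x$ and $x^g$ lie in the same conjugacy class, contradicting the hypothesis and completing the proof. Everything preceding this is purely formal; the CFSG-dependent statement about generation by conjugate pairs in simple groups is the real content. Alternatively, one could weaken the target and exhibit, family by family in the classification, a conjugate pair $(x, x^g)$ generating merely a nonsolvable subgroup (not necessarily the whole group), which would suffice here and is in spirit analogous to the case analysis carried out for Theorem~B.
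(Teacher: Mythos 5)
Your proof is essentially correct, but note that the paper itself gives no proof of this statement: it is quoted in Section~\ref{sec:thompson} as a known theorem, with the proofs residing in \cite{GGKP1} and \cite{G2}, so there is no in-paper argument to compare against. Your reduction is sound and is exactly the style of argument the paper uses to deduce Theorems A and C: the hypothesis passes to subgroups (conjugacy in $H$ implies conjugacy in $G$) and to quotients (lift $\bar g$ and a preimage of $\bar x$, so a solvable $\gen{x,x^g}$ surjects onto $\gen{\bar x,\bar y}$), whence a minimal counterexample has all proper subgroups and proper quotients solvable and is forced to be nonabelian simple. Where you diverge from the cited sources is the final step: Guest and Gordeev et al.\ prove the much stronger Baer--Suzuki-type characterization of the solvable radical ($x\in R(G)$ if and only if $\gen{x,x^g}$ is solvable for all $g$), which needs a detailed analysis of almost simple groups, whereas your reduction lets you get away with the blunter fact that every finite nonabelian simple group is generated by two conjugate elements. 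That fact is indeed available, but you should cite it precisely: it is a corollary of Guralnick--Kantor's spread (``$3/2$-generation'') theorem, which produces a conjugacy class $C$ such that every nontrivial element together with some member of $C$ generates $G$; applying this with $x\in C$ yields the conjugate generating pair. Two further remarks: since in your minimal counterexample all proper subgroups are solvable, any nonsolvable $\gen{x,x^g}$ is automatically all of $G$, so your alternative ``family-by-family, nonsolvable only'' variant buys nothing extra there; and, for the same reason, the minimal counterexample is a minimal simple group, so the final contradiction could even be obtained from Thompson's classification of minimal simple groups in \cite{T} together with a direct check of conjugate generation in those few families, in the spirit of the case analysis the paper carries out for Theorem~B.
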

 
A stronger result of this type follows immediately from Guest \cite[Theorem 3]{G1}, while a slightly 
weaker version was obtained recently by Kaplan and Levy in \cite[Theorem 4]{KL}. 
This criterion involves
only a limited $2$-generation within the conjugacy classes of elements of odd prime-power order.

\begin{theorem}
A finite group $G$ is solvable if and only if, for all $x,y\in G$ with 
$x$ a $p$-element, for some  
prime $p > 3$, and $y$ a $2$-element, the group $\gen{x,x^y}$ is solvable.
\end{theorem}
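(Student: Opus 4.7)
The easy direction is trivial: subgroups of solvable groups are solvable. For the converse, I would argue by minimal counterexample. Let $G$ be a non-solvable group of least order satisfying the hypothesis. The hypothesis is inherited by subgroups, and it passes to quotients because any $p$-element of $G/N$ (resp.\ $2$-element) admits a $p$-element (resp.\ $2$-element) lift in $G$---take the $p$-part (resp.\ $2$-part) of any lift, or use a Sylow subgroup of the preimage---and conjugacy descends to the quotient. Thus $G$ has a unique minimal normal subgroup $N$ with $G/N$ solvable; since an abelian $N$ would force $G$ solvable, $N\cong S^k$ for some finite nonabelian simple group~$S$. The same reduction used in Section~\ref{sec:ABCD} to deduce Theorem~A from Theorem~B, via the action of $G$ on the simple factors of $N$, allows one to assume that $G$ is almost simple with socle~$S$.

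The matter then reduces to the following \emph{Key Claim}: for every finite nonabelian simple group $S$, there exist a prime $p > 3$ dividing $|S|$, a $p$-element $x\in S$, and a $2$-element $y\in\aut{S}$ such that $\gen{x,x^y}$ is non-solvable. This is a strengthening of Theorem~B: one of the two primes must be~$2$, and the non-solvable subgroup must be generated by a pair of \emph{conjugate} $p$-elements. I would establish the Key Claim via the classification of finite simple groups, case by case. For alternating groups $A_n$ with $n\geq 5$, pick the largest prime $p$ with $5\leq p\leq n$, let $x$ be a $p$-cycle, and take $y$ to be an involution exchanging a point in the support of $x$ with a point outside it (if $p = n$, transpose two points of the support); then $\gen{x,x^y}$ contains two $p$-cycles which are not powers of one another, and a Jordan-type argument on primitive permutation groups shows that this subgroup contains a non-solvable alternating section. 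For the sporadic simple groups, the claim is settled by direct inspection of the \textsc{Atlas}. For groups of Lie type over $\F_q$, use Zsigmondy's theorem to exhibit a primitive prime divisor $p > 3$ of $q^e \mp 1$ for a suitable $e$; an element $x$ of order $p$ lies inside a cyclic maximal torus $T$, and one takes $y$ to be a $2$-element of $N_G(T)$ projecting to a non-trivial reflection in the Weyl group (adjusting by an element of $T$ if needed to make $y$ a $2$-element). A routine check shows that $\gen{x, x^y}$ contains a non-solvable classical subsystem subgroup, typically an $\SL_2$-type factor or a larger classical group.

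The main obstacle is the Key Claim for the small exceptional cases: groups of Lie type of very small rank in characteristic~$2$, where all non-trivial $2$-elements are unipotent and have limited action on semisimple elements of order~$p$; and the handful of simple groups whose prime divisor spectrum contains at most one prime greater than~$3$ (e.g.\ $A_5$, $\PSL(2,7)$, $\PSL(2,8)$, and certain Suzuki and Ree groups). In each such case a direct computation, parallel to those carried out for Theorem~B in Sections~\ref{sec:altspor} and~\ref{sec:Lie}, should suffice. The simultaneous constraints ``one prime must equal $2$'' and ``the other prime must exceed $3$'' make the search for suitable pairs $(x,y)$ somewhat more delicate than for Theorem~B, but the overall strategy closely mirrors the approach of the present paper.
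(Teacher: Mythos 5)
You should first be aware that the paper contains no proof of this statement: it is quoted in Section~\ref{sec:thompson} as a known result, attributed to Guest \cite[Theorem 3]{G1} (with a slightly weaker version in Kaplan--Levy \cite{KL}). So the comparison can only be with the general strategy in the literature, and against that your outline has two genuine gaps. First, a quantifier gap in the Key Claim. The hypothesis ``$\gen{x,x^y}$ solvable for all $p$-elements $x$ ($p>3$) and $2$-elements $y$'' is closed under subgroups as well as quotients, so in a minimal counterexample \emph{every proper subgroup and every proper quotient is solvable}; hence the minimal counterexample is itself a nonabelian simple group $S$ (indeed a minimal simple group). Consequently the $2$-element $y$ witnessing nonsolvability must be found \emph{inside} $S$, not merely in $\aut{S}$: an outer $2$-element cannot be pulled back into $G$ unless $G$ happens to induce it, and when $G=S$ it never is. Your alternating-group construction runs into exactly this problem: a single transposition is an odd permutation, so your $y$ does not lie in $A_n$, and the claim as you state it (``$y\in\aut{S}$'') is too weak to contradict the hypothesis in the reduced group. (Your appeal to the reduction of Section~\ref{sec:ABCD} is also off target: the proof of Theorem~A there does not reduce to almost simple groups; and in any case, as noted, the correct reduction here lands on a simple, indeed minimal simple, group, which would let you replace the full CFSG case analysis by Thompson's short list.)

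Second, and more seriously, the construction you propose for groups of Lie type cannot work: if $x$ lies in the maximal torus $T$ and $y\in N_G(T)$, then $x^y\in T$, so $\gen{x,x^y}\leq T$ is abelian and in particular solvable --- the exact opposite of what you need. The Lie-type case is the bulk of the work in Guest's proof and requires genuinely different arguments (analysis of the action on the natural module, subgroups generated by two conjugate $p$-elements, etc.), not a reflection in the Weyl group. Finally, even where your $A_n$ idea can be repaired by choosing an even $2$-element, the ``Jordan-type argument'' is not available as stated: when the two $p$-cycles' supports overlap in $p-1$ points the relevant degree is $d=p+1$, so the condition $p\leq d-3$ used in the paper's own Proposition~\ref{lem:alt} fails, and a $2$-transitive group of degree $p+1$ containing a $p$-cycle can be solvable (e.g.\ ${\rm A\Gamma L}_1(8)$ for $p=7$); the case $p=n$, where you conjugate by a transposition of two support points, likewise needs a separate argument that $\gen{x,x^y}$ is not contained in a solvable transitive group of prime degree. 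So the overall shape (reduction to simple groups plus a case analysis) is right, but the key existence statement is misquantified and its proposed verification fails in the main (Lie type) case.
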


Our requirement, while ranging over all conjugacy classes, requires only \emph{existence} of a solvable 2-generator 
subgroup with one generator from each of two  classes. We know of no similar criteria in this respect.

The fourth result we draw attention to is in a 2006 paper of Kunyavski\u i, Plotkin, Shalev and 
the second author \cite{GKPS}. 
They proved that membership of the solvable radical of a finite group is characterised by solvability 
of certain $2$-generator subgroups. (The \emph{solvable radical} $R(G)$ of a finite group $G$ is the largest 
solvable normal subgroup of $G$.)

\begin{theorem}\label{R}
For a finite group $G$, the solvable radical $R(G)$ coincides with the set of all elements $x\in G$ with the property: 
``for any $y\in G$, the subgroup $\gen{ x,y}$ is solvable".
\end{theorem}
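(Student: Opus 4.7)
Write $T(G) = \{x\in G:\gen{x,y}\text{ is solvable for all } y\in G\}$; the claim is $T(G)=R(G)$. The inclusion $R(G)\sbs T(G)$ is immediate: for $x\in R(G)$ and $y\in G$, the subgroup $\gen{x,y}\cap R(G)$ is a solvable normal subgroup of $\gen{x,y}$ containing $x$, and the quotient $\gen{x,y}/(\gen{x,y}\cap R(G))$ is generated by the image of $y$ alone, hence cyclic; so $\gen{x,y}$ is solvable.

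For the reverse inclusion $T(G)\sbs R(G)$, I would argue by induction on $|G|$, proving the contrapositive: if $x\notin R(G)$ then some $y\in G$ makes $\gen{x,y}$ nonsolvable. If $R(G)\neq 1$, pass to $\bar G=G/R(G)$: the image $\bar x$ is nontrivial, $R(\bar G)=1$, and induction yields $\bar y$ with $\gen{\bar x,\bar y}$ nonsolvable; any lift $y$ then has $\gen{x,y}$ surjecting onto $\gen{\bar x,\bar y}$ and so nonsolvable. Hence I may assume $R(G)=1$, in which case $G$ embeds in $\aut{N}$ with socle $N=S_1\times\cdots\times S_k$ a direct product of nonabelian simple groups, and $1\neq x\in G$.

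The crux is then the following \emph{key lemma} (the technical heart of \cite{GKPS}, proved via CFSG): for every finite almost simple group $H$ with socle $S$ and every $1\neq h\in H$, there exists $s\in S$ with $\gen{h,s}$ nonsolvable. Granting this, if $x\in N$ has a nontrivial coordinate $x_i$ in some factor $S_i$, apply the lemma inside $S_i$ to obtain $y_i\in S_i$ with $\gen{x_i,y_i}$ nonsolvable, and take $y=(1,\dots,y_i,\dots,1)\in N\le G$; then the projection of $\gen{x,y}$ onto $S_i$ is $\gen{x_i,y_i}$, forcing $\gen{x,y}$ nonsolvable. If instead $x\in G\setminus N$ stabilises some $S_i$ and acts nontrivially on it, apply the almost simple version of the key lemma in $\gen{\mathrm{Inn}(S_i),x|_{S_i}}\le\aut{S_i}$, and pull a witness back to $S_i\le G$. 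The remaining case, where $x$ permutes the factors of $N$ nontrivially without stabilising any, is handled by a wreath/diagonal-subgroup argument that exploits the orbit structure: one chooses $s$ inside a factor of a given $\gen{x}$-orbit so that the subgroup $\gen{s,xsx^{-1},\dots}$, together with $x$, contains a full composition factor isomorphic to some $S_i$.

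The principal obstacle is clearly the key lemma itself. Its proof divides along the three standard families: alternating groups via explicit classes of cycles of prime length combined with a short case analysis, sporadic groups by direct computation, and groups of Lie type via primitive prime divisor and Sylow arguments, with the low-rank groups in small characteristic being the most delicate. This lemma is strictly stronger than Theorem~B of the present paper — it must produce a witness $s$ for \emph{every} nontrivial $h\in H$, not merely for a pair of prime orders — but draws on the same simple-group toolkit.
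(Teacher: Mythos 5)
You should first note that the paper itself contains no proof of Theorem~\ref{R}: it is quoted in the survey Section~2 as a known result of Guralnick, Kunyavski\u{\i}, Plotkin and Shalev \cite{GKPS}, so there is no in-paper argument to compare yours against. Your global architecture (the easy inclusion $R(G)\sbs T(G)$, induction reducing to $R(G)=1$, then a socle analysis resting on a CFSG-based almost-simple ``key lemma'') is indeed the strategy of \cite{GKPS}, and your key lemma is essentially their main technical theorem; assuming it is legitimate in this context, but it does mean your text is a reduction to \cite{GKPS} rather than a self-contained proof.

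Within that reduction there is, however, a genuine error in the case where $x$ permutes the simple factors of $N=S_1\times\cdots\times S_k$. Your prescription --- ``choose $s$ inside a factor of a given $\gen{x}$-orbit so that $\gen{s,xsx^{-1},\dots}$ together with $x$ contains a full composition factor'' --- fails in general. Take $G=A_5\wr C_2$ with $x$ the involution interchanging the two factors: here $R(G)=1$, so $x$ must admit a witness, yet for \emph{every} $s=(a,1)$ in a single factor the normal closure of $s$ in $\gen{x,s}$ is $\gen{(a,1),(1,a)}=\gen{a}\times\gen{a}$, which is abelian, so $\gen{x,s}$ is abelian-by-$C_2$ and hence solvable. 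The witness must be supported on at least two factors of an $\gen{x}$-orbit: choose $a,b\in S_1$ with $\gen{a,b}$ nonsolvable (possible since $S_1$ is a nonabelian simple group), and let $y$ have $S_1$-coordinate $a$, $S_2$-coordinate $xbx^{-1}$ (where $x^{-1}S_2x=S_1$) and all other coordinates trivial; then $x^{-1}yx$ has $S_1$-coordinate $b$, so $\gen{y,\,x^{-1}yx}\le\gen{x,y}$ projects onto $\gen{a,b}$ and $\gen{x,y}$ is nonsolvable. A smaller point: your trichotomy omits the case in which $x$ stabilises some factors but centralises each factor it stabilises while moving others; since $C_G(N)=1$ forces such an $x\ne 1$ to move some factor, this case should simply be folded into the (corrected) orbit argument just described.
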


Further results in this direction may be found in \cite{GGKP2} and in \cite{G3}.

In view of Theorem A and Theorem~\ref{R}, it might seem reasonable to conjecture that the solvable
radical of a finite group $G$ is the set of $x\in G$ such that for any $y\in G$ there exists 
$g\in G$ making the group $\gen{x,y^g}$ solvable. However, this conjecture is false. For example,
the group $A_5$ contains solvable subgroups of order $6$ and $10$, so if $x\in A_5$ is of order $2$,
then it satisfies the above conjecture, while it certainly does not belong to the solvable radical
of $A_5$. The same holds for elements of order $3$ in $PSL(2,7)$ and  Simon 
Guest and the fourth author have constructed such counterexamples for elements $x$ of an arbitrary
prime order.   

\section{Alternating and sporadic simple groups}

\label{sec:altspor}

We first note the following two lemmas.

\begin{lemma}   \label{lem:solvable}  
Let $H$ be a finite solvable group with order divisible by distinct primes $p$ and $q$.
Then $H$ contains a subgroup of exponent $pq$, and of order  $pq^a$ or $p^aq$, for some positive integer $a$.
In particular, if the Sylow $p$- and $q$-subgroups of $H$ are both cyclic, then $H$ contains a subgroup
of order $pq$. 
\end{lemma}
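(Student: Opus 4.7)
The plan is first to reduce to the case where $H$ is itself a $\{p,q\}$-group by replacing $H$ with a Hall $\{p,q\}$-subgroup, which exists by Hall's theorem since $H$ is solvable. Any subgroup of that Hall subgroup with the desired order and exponent is also a subgroup of $H$ with the same properties, so we may assume $|H|=p^aq^b$ with $a,b\geq 1$.

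Since $H$ is a nontrivial solvable group, its Fitting subgroup $\fit{H}=\oh{p}{H}\times\oh{q}{H}$ is nontrivial, so at least one of $\oh{p}{H}$ and $\oh{q}{H}$ is nontrivial. If both are nontrivial, then since they are normal subgroups of coprime order they commute elementwise; taking $x\in\oh{p}{H}$ of order $p$ and $y\in\oh{q}{H}$ of order $q$ yields $\gen{x,y}=\gen{x}\times\gen{y}$, cyclic of order $pq$ and hence of exponent $pq$.

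If only one is nontrivial, say $\oh{p}{H}\neq 1$, let $V=\Omega_1(\zent{\oh{p}{H}})$, the elementary abelian subgroup of $\zent{\oh{p}{H}}$ generated by the elements of order $p$. Since $V$ is characteristic in $\oh{p}{H}$ we have $V\nor H$. Pick any $y\in H$ of order $q$ and set $L=V\gen{y}$. Because $V\nor H$ and $V\cap\gen{y}=1$, the set $L$ is a subgroup of order $|V|\cdot q=p^cq$, where $|V|=p^c$. The quotient $L/V$ is cyclic of order $q$, so every $g\in L$ satisfies $g^q\in V$ and hence $g^{pq}=1$; thus the exponent of $L$ divides $pq$, and equals $pq$ since $L$ contains elements of orders $p$ and $q$.

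The ``in particular'' clause follows immediately: the first case already gives a subgroup of order $pq$, and in the second case, if the Sylow $p$-subgroup of $H$ is cyclic, then $\oh{p}{H}$ is cyclic as a subgroup of a cyclic $p$-group, so $V$ has order $p$ and $|L|=pq$. The only subtle point is the exponent computation in the second case, which is handled cleanly via the quotient $L/V$; everything else is a direct application of Hall's theorem and standard properties of the Fitting subgroup of a solvable group.
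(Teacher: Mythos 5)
Your proof is correct and follows essentially the same route as the paper's: reduce to a Hall $\{p,q\}$-subgroup, find a normal elementary abelian subgroup for one of the two primes, and adjoin an element of the other prime order to get a subgroup of order $p^aq$ (or $pq^a$) and exponent $pq$, with the cyclic-Sylow hypothesis forcing that elementary abelian subgroup to have prime order. The only cosmetic difference is that you locate it via the Fitting subgroup and $\Omega_1(\zent{\oh{p}{H}})$, with an extra (unneeded) case split when both $\oh{p}{H}$ and $\oh{q}{H}$ are nontrivial, whereas the paper simply takes a minimal normal subgroup of the Hall subgroup, which is automatically elementary abelian.
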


\begin{proof} The group $H$ contains a Hall $\{p,q\}$-subgroup, so we may assume that
$|H|=p^bq^c$ for some positive integers $b$ and $c$.    Let $N$ be a minimal normal subgroup of $H$.  Interchanging
$p$ and $q$ if necessary, we may assume that  $N$ is elementary
abelian of order $p^a$ for some positive integer $a$.   Thus $H$ contains 
a subgroup $K$ containing $N$ and
of order  $p^aq$. This subgroup $K$ must have exponent $pq$, and if the Sylow $p$-subgroups of $H$ are cyclic then $|N|=p^a=p$ and $|K|=pq$. 
\end{proof} 
              
\begin{lemma} \label{lem:solvable2}  Let $H$ be a finite group and let $p,q$ be distinct
prime divisors of $|H|$. Assume that
\begin{enumerate}
\item Sylow $q$-subgroups of $H$ are cyclic and Sylow $p$-subgroups  of $H$ have order $p^s$;
\item $p$ does not divide $q-1$;
\item for $1\le m \le s$, $q$ does not divide $p^m-1$  (this certainly holds if $q>p^s)$;  and
\item  $H$ contains no elements of order $pq$.
\end{enumerate}
Then $H$ contains no subgroup of order $p^aq^b$ with $a,b>0$. In particular,
$H$ is not solvable.
\end{lemma}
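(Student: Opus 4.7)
The plan is to argue by contradiction: suppose $K \le H$ is a subgroup of order $p^a q^b$ with $a, b \ge 1$. By Burnside's $p^aq^b$-theorem, $K$ is solvable, so Lemma~\ref{lem:solvable} applied to $K$ produces a subgroup $L \le K$ of exponent $pq$ and order $pq^m$ or $p^m q$ for some $m \ge 1$. I will show that each shape collides with one of the arithmetic hypotheses (2)--(4).

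First I would reduce to the situation $|L| = p^m q$ with $1 \le m \le s$ and $L = P \rtimes \gen{h}$, where $P$ is elementary abelian of order $p^m$ and $|h| = q$. In the case $|L| = pq^m$, hypothesis (1) makes the Sylow $q$-subgroup of $L$ cyclic, while exponent $pq$ forces it to have exponent $q$; together these give $m = 1$, placing us in the other case. In the case $|L| = p^m q$, the bound $m \le s$ comes from (1), and elementarity of the Sylow $p$-subgroup $P$ from the exponent condition. A Sylow count forces $P \nor L$: otherwise the number of Sylow $p$-subgroups of $L$ equals $q$, giving $q \equiv 1 \pmod p$ and contradicting (2). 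Conjugation by $h$ then acts on $P$ as some $\sigma \in \GL_m(\F_p)$ with $\sigma^q = 1$.

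The final step splits on the order of $\sigma$. If $\sigma = 1$, then $L$ is abelian and $vh$ has order $pq$ for any nontrivial $v \in P$, contradicting (4). If $\sigma$ has order $q$, then $q$ divides
\[ |\GL_m(\F_p)| = p^{m(m-1)/2}\prod_{i=1}^m (p^i - 1), \]
so, as $q \neq p$ is prime, $q \mid p^k - 1$ for some $1 \le k \le m \le s$, contradicting (3). Either way I reach a contradiction, so no subgroup of order $p^a q^b$ with $a,b\ge 1$ exists in $H$. The ``in particular'' clause is then immediate: a solvable $H$ would contain a Hall $\{p,q\}$-subgroup of order $p^a q^b$ with $a, b \ge 1$ (both $p$ and $q$ divide $|H|$), which has just been ruled out.

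I do not anticipate a serious obstacle. The only point requiring care is extracting the semidirect structure $L = P \rtimes \gen{h}$ cleanly from Lemma~\ref{lem:solvable} and verifying that $P$ is elementary abelian; once this is in place, the arithmetic content of (2) and (3) is exactly ``$p \nmid |\aut{\mathbb{Z}_q}|$'' and ``$q \nmid |\GL_s(\F_p)|$'', and the contradiction essentially writes itself.
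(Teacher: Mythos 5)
Your overall strategy is sound and close in spirit to the paper's proof (which works directly with a minimal normal subgroup of the putative $\{p,q\}$-subgroup rather than invoking Lemma~\ref{lem:solvable}), and most steps are fine: Burnside's $p^aq^b$-theorem, the reduction of the $pq^m$ case to $m=1$ via cyclicity, the Sylow count using (2), the $\sigma=1$ case via (4), and the Hall-subgroup remark at the end. However, there is a genuine gap at the point you yourself flagged as the delicate one: you claim that the exponent-$pq$ condition makes the Sylow $p$-subgroup $P$ of $L$ \emph{elementary abelian}. The exponent condition only gives that $P$ has exponent $p$, and for odd $p$ a group of exponent $p$ need not be abelian (e.g.\ the extraspecial group of order $p^3$ and exponent $p$); nothing in the \emph{statement} of Lemma~\ref{lem:solvable} rules this out, since it does not tell you for which of the two primes the Sylow subgroup is elementary abelian. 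Your final step identifies conjugation by $h$ with an element $\sigma\in\GL_m(\F_p)$, and this identification, hence the conclusion $q\mid p^k-1$, is unjustified when $P$ is nonabelian.

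The gap is repairable, but it needs an extra idea you did not supply. One fix: a $q$-automorphism of a $p$-group that acts trivially on $P/\Phi(P)$ is trivial, so a nontrivial $\sigma$ of order $q$ induces a nontrivial element of $\aut{P/\Phi(P)}\cong\GL_d(\F_p)$ with $d\le m\le s$, and then (3) gives the contradiction as before. Alternatively, argue as the paper does: take a minimal normal subgroup $N$ of the whole $\{p,q\}$-subgroup $B$ (elementary abelian since $B$ is solvable); if $N$ is a $q$-group use (1), (2) and (4) to rule out a subgroup of order $pq$ above it, and if $N$ is a $p$-group use (4) to see that an element of order $q$ acts fixed-point-freely on $N$, so orbit counting gives $q\mid p^i-1$ with $p^i=|N|\le p^s$, contradicting (3). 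Either patch completes your argument; as written, though, the elementary-abelian claim is false in general and the $\GL_m(\F_p)$ step rests on it.
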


\begin{proof} Suppose, to the contrary, that $H$ contains a subgroup $B$ of order
$p^aq^b$ with $a,b>0$. Let $N$ be a minimal normal subgroup of $B$.
Since $B$ is solvable, $N$ is elementary abelian.
If $N$ is a $q$-group, then by (1) $|N|=q$ and $B$ contains a subgroup $M>N$ of order $pq$. If
$M$ is nonabelian, then $p$ divides $q-1$, in contradiction to (2),
while if $M$ is abelian, then it is cyclic of order $pq$, 
in contradiction to (4). Thus $N$ is not a $q$-group. If $N$ is a $p$-group, then by (1) 
$|N|=p^i\leq p^s$ and hence $B$ contains a subgroup $M>N$ of order $qp^i$. By (4), an element 
of order $q$ in $M$ acts fixed point freely on $N$, which implies that $q$ divides $p^i-1$,
in contradiction to (3). Thus $B$ does not exist, as required.
\end{proof}

Theorem~B for the alternating groups  follows from the following proposition.

\begin{proposition}\label{lem:alt}
For all $n\geq5$, there exist distinct
primes $p$ and $q$ satisfying
$n/2  \le   p< q  \leq  n$ such that,
for all $x,y\in A_n$ with $|x|=p$ and $|y|=q$, the subgroup
$\langle x,y\rangle \cong A_d$ for some  $d\geq q$.
In particular, $\langle x,y\rangle $ is nonsolvable.
\end{proposition}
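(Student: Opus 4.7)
The plan is to choose two primes $p<q$ in $[n/2,n]$, exploit the fact that in $S_n$ any element of order $p$ or $q$ is forced to be a single cycle (since $p,q>n/2$), and then apply Jordan's theorem on primitive groups containing a prime-length cycle to the restricted action of $G=\langle x,y\rangle$ on its support. Existence of two primes in $[n/2,n]$ follows from Bertrand's postulate together with a finite check for small $n$; for most $n$ one can further arrange $q-p\ge 3$.

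Fix such $x,y$ and write $A=\mathrm{supp}(x)$, $B=\mathrm{supp}(y)$, $\Omega=A\cup B$, $d=|\Omega|$. The inequality $p+q>n\ge|\Omega|$ forces $A\cap B\ne\emptyset$. I would first check that $G$ is transitive on $\Omega$: any $G$-orbit meeting $A\cap B$ must absorb the single $x$-orbit $A$ and the single $y$-orbit $B$, hence all of $\Omega$. For primitivity, suppose a nontrivial $G$-invariant partition had blocks of size $b$; since $q$ is prime, the $q$-cycle $y$ either fixes every block setwise (forcing its $q$-cycle into one block, so $b\ge q>d/2$ and $b=d$), or it permutes blocks in a $q$-cycle (forcing $d/b\ge q$ and $b<2$). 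Both options contradict nontriviality, so $G$ acts primitively on $\Omega$.

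Jordan's theorem asserts that a primitive subgroup of $S_d$ containing a prime-length cycle with at least three fixed points must contain $A_d$. Viewed in $\mathrm{Sym}(\Omega)\cong S_d$, the $p$-cycle $x$ fixes $d-p$ points, so whenever $d\ge p+3$ we conclude $G\ge A_d$ and hence (by parity, since $x,y$ are cycles of odd prime length) $G\cong A_d$ with $d\ge q$. The only obstruction is the boundary case $d=q=p+2$, which forces $A\subseteq B$ with $(p,q)$ a twin prime pair. For all but a short list of $n$ (essentially $n\in\{5,\ldots,10,15,16\}$) one can simply choose $p,q$ with $q-p\ge 3$ to avoid this; for the remaining values I would invoke the classical description of primitive permutation groups of prime degree $q$, according to which such a group is either contained in $\mathrm{AGL}(1,q)$, contains $A_q$, or belongs to a short list of exceptions ($\PSL(n,r)$ of prime degree, $M_{11}$, $M_{23}$, $\PSL(2,11)$). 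The affine case is ruled out because $p=q-2$ cannot divide $q-1$, and each exceptional candidate is disposed of by checking that its order is not divisible by the prime $p=q-2$; what remains is $G\ge A_q$, hence $G\cong A_q$ by parity. The principal obstacle is this twin-prime boundary case, where Jordan's theorem does not apply and one must appeal to the classification of primitive groups of prime degree.
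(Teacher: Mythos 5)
Your overall strategy --- restrict $G=\gen{x,y}$ to $\Omega=\mathrm{supp}(x)\cup\mathrm{supp}(y)$, prove transitivity and primitivity of the action there, and apply Jordan's theorem to the prime-length cycle --- is essentially the route the paper takes for the ``$\cong A_d$'' assertion. Your uniform treatment of the boundary case $d=q=p+2$ via the description of primitive groups of prime degree is a clean alternative to the paper's ad hoc checks for small $n$ (note only that the complete list of $2$-transitive groups of prime degree rests on the classification, which the paper is at pains to avoid in its proof of Theorem~B for the known simple groups; for the few small degrees you actually need, classical results suffice). The paper also runs a separate, purely arithmetic nonsolvability argument (its Lemma~3.2) that is independent of cycle structure, which is exactly what insulates it from the problem described next.

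There is, however, a genuine gap: your premise that every element of order $p$ is a single $p$-cycle requires $p>n/2$ strictly, and for $n=6$ and $n=10$ this is unattainable --- the only primes in $[n/2,n]$ are $\{3,5\}$ and $\{5,7\}$ respectively, so you are forced to take $p=n/2$, and then $x$ may be a product of two disjoint $p$-cycles. Your transitivity argument and, more importantly, your application of Jordan's theorem then do not apply. This is not a routine omission: for $n=6$, an element of cycle type $(3,3)$ together with a $5$-cycle can generate $\PSL(2,5)$ acting transitively on all six points, so the conclusion ``$G\geq \mathrm{Alt}(\Omega)$'' is simply false in that configuration; the proposition survives only because $\PSL(2,5)\cong A_5$ (so one may take $d=5\geq q$), a coincidence the paper records explicitly and which your argument would miss. (The $n=10$ case with $x$ of type $(5,5)$ can be rescued more easily, e.g.\ by applying Jordan to the $7$-cycle, which has exactly three fixed points in the degree-$10$ action, or by inspecting the primitive groups of degree $10$.) A smaller point: your list of exceptional $n$ for which no pair with $q-p\geq 3$ exists is slightly off --- for instance $n=14$, where the only primes strictly between $7$ and $14$ are the twin pair $11,13$ --- but since your prime-degree fallback handles any twin-prime boundary case, that is only bookkeeping.
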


\begin{proof} Note that if $m$ is a positive integer and
$\pi(m)$ denotes the number of primes at most $m$, then the following
is known (see, for example \cite[Theorem 32]{Tr}):
$$ \pi(2m) - \pi(m) > m/(3\ln 2m)  \ \text{for}  \   m>1.$$

Now, $m/(3\ln 2m)$ is at least $2$ or $3$ for $m\geq 9$ or $23$ respectively.
It follows, by checking small values of $n$, that $  \pi(n) - \pi(n/2) \geq 2$
for     $n\geq 5$,
with the exception of  $n=6$ or $10$.   Thus for all $n \ge 5$,
there are primes $p, q$ with $\frac{n}{2} \le p < q \le n$. In each case we choose
$q$ to be the largest prime at most $n$, and if $n\ne 6, 10$ we choose 
$p$ to be the smallest prime greater than $\frac{n}{2}$; while for $n=6,10$ we choose 
$p=3,5$ respectively.

Let $x,y\in A_n$ with $|x|=p$ and $|y|=q$. As $q>n/2$, the Sylow $q$-subgroup of $A_n$ is cyclic
and as $p+q>n$, $A_n$ contains no elements of order $pq$. Moreover, either $p>n/2$, or $p=n/2$ implying that 
$q<n$. Thus $q-1>p>q/2$, whence
$p$ does not divide $q-1$  and 
$q$ does not divide $p^2-1$. Hence, by Lemma~\ref{lem:solvable2},
$\gen{x,y}$ is nonsolvable.

To prove the stronger assertion, note that $\gen{x,y}$ has an orbit of length $d\geq q$
and fixes each point outside this orbit. Now $d\leq n$, 
and so $\frac{d}{2}\leq \frac{n}{2}\leq p<q\leq d$. Moreover if either $d<n$ or $n\ne 6,10$, then we have $p>\frac{d}{2}$. If $\frac{d}{2}<p\leq d-3$ then it follows from
a theorem of Jordan (see \cite[Theorem 13.9]{Wi}) that $\gen{x,y}\cong A_{d}$.
Clearly $\frac{d}{2}<p\leq d-3$ holds if $n\ge 46$ and it turns out, by checking smaller values of $n$,
that the cases not covered by this argument, for our choices of $p,q$,
are $5\leq n\leq 10$ and $15\leq n\leq 16$. In these cases, ad hoc arguments with the primes
$11,13$ for $15\leq n\leq 16$, the primes $5,7$ for $7\leq n\leq 10$,
and the primes $3,5$ for $n=5,6$, show that either $\gen{x,y}\cong A_d$,
or $n=d=6$ and $\gen{x,y}\cong {\rm PSL}_2(5)\cong A_5$.
\end{proof}

For each sporadic group $G$,
including ${^2}F_4(2)'$, we can choose, using the \cite{ATLAS}, two primes $p$ and $q$ such
that Lemma \ref{lem:solvable2} applies and hence if $x,y\in G$ are of order $p$ and $q$,
respectively, then $\gen{x,y}$ is nonsolvable.  
More precisely, we give below the list $(G,p^a,q^b)$ for sporadic groups $G$,
including ${^2}F_4(2)'$, where $p$
and $q$ are primes satisfying Lemma \ref{lem:solvable2} and the corresponding Sylow subgroups 
of $G$ have order $p^a$ 
and $q^b$, respectively.

$$  M_{11},  3^2, 11;    M_{12},  3^3, 11;    M_{22},  7, 23;    M_{23},  7, 23;
M_{24},   7, 23 ;      J_1,   11, 19.$$
 $$      J_2,  3^3, 7;        J_3 , 17, 19;
J_4,  37, 43;             HS,    7, 11;              He,   3^3, 17;       McL, 7, 11. $$
$$Suz,  11, 13;             Ly,   37, 67;              Ru ,  13, 29;        O'N , 19, 31;
Co_1.  13, 23;          Co_2,   7, 23.  $$    
$$Co_3,  7, 23;       Fi_{22}, 11, 13;
Fi_{23},   17, 23;      Fi'_{24},  23, 29;     HN,   11, 19;       Th,  19, 31.$$
$$ B,   31, 47;                M,   59, 71;               {^2}F_4(2)',  5^2, 13.$$

In fact, for each sporadic group we can choose primes $p, q$ so that
$\gen{x,y}$ is not only nonsolvable but also simple for
$x$ of order $p$ and $y$ of order $q$ (see \cite[Proposition 2.2]{DHP}).

Thus,  we have shown: 

\begin{proposition}\label{sporadic}
Theorem~B holds for the alternating simple groups, for the sporadic simple groups and for ${^2}F_4(2)'$. 
\end{proposition}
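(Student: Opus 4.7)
The plan is to assemble the conclusion from the two preceding ingredients. For the alternating groups $A_n$ with $n\geq 5$, Proposition~\ref{lem:alt} already delivers the stronger statement that for suitable primes $n/2\leq p<q\leq n$, any $x,y\in A_n$ with $|x|=p$ and $|y|=q$ generate a subgroup isomorphic to $A_d$ with $d\geq q\geq 5$, which is in particular nonsolvable. So nothing further is required in the alternating case beyond quoting Proposition~\ref{lem:alt}.

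For the 26 sporadic groups together with ${}^2F_4(2)'$, I would verify, group by group, that the displayed pair of primes $(p,q)$ together with the listed Sylow orders $(p^a,q^b)$ satisfies the four hypotheses of Lemma~\ref{lem:solvable2}. Explicitly: (i) the Sylow $q$-subgroup is cyclic (which is immediate because every entry in the table has $q^b=q$) and the Sylow $p$-subgroup has the listed order $p^a$; (ii) $p\nmid q-1$; (iii) $q\nmid p^m-1$ for $1\leq m\leq a$ (typically immediate since $q>p^a$ in most rows, and checked by hand in the remaining few); and (iv) $G$ contains no element of order $pq$. Once all four conditions are verified, Lemma~\ref{lem:solvable2} gives that no subgroup of $G$ of order divisible by both $p$ and $q$ is solvable, so in particular $\langle x,y\rangle$ is nonsolvable whenever $|x|=p$ and $|y|=q$, which is the conclusion of Theorem B for $G$.

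Conditions (i), (ii), and (iii) are elementary arithmetic checks performed directly from the displayed table, and the Sylow orders themselves are read off from \cite{ATLAS}. The only nontrivial verification is (iv): the absence of an element of order $pq$ in $G$. For this I would consult the list of element orders (or equivalently the conjugacy-class list) in \cite{ATLAS} for each sporadic group. The primes have been chosen so that $p$ and $q$ lie in different ``components'' of the prime graph of $G$—for instance $59,71$ for the Monster and $31,47$ for the Baby Monster—making the check in each case a direct lookup rather than a computation.

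The main obstacle is therefore purely one of bookkeeping: carrying out case (iv) uniformly across all 27 groups. Since the pairs have been selected precisely so that $pq$ does not appear among the element orders in \cite{ATLAS}, the verification is routine, and no conceptual difficulty intervenes. Combining the two parts then yields Theorem B for the alternating simple groups, the sporadic simple groups, and ${}^2F_4(2)'$.
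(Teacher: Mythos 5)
Your proposal is correct and is essentially the paper's own argument: the alternating case is exactly Proposition~\ref{lem:alt}, and for the sporadic groups and ${}^2F_4(2)'$ the paper likewise verifies the hypotheses of Lemma~\ref{lem:solvable2} for the listed triples $(G,p^a,q^b)$ using \cite{ATLAS} data (cyclic Sylow $q$-subgroups, the divisibility conditions, and the absence of elements of order $pq$). No gap to report.
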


\section{Groups of Lie Type} \label{sec:Lie}

In the following,  $q = p^k$ is a power of a prime $p$.
For any positive integer $e$, we say that a prime $r$ is a
\emph{primitive prime divisor} of $q^e -1$ if $r$ divides $q^e -1$
and $r$ does not divide $q^i -1$ for any positive integer $i < e$.
Observe that then $e$ is the order of $q$  modulo the prime $r$; so  $e$ divides $r-1$ and, in particular, $r \geq e+1$.
The set of primitive prime divisors of $q^e -1$
will be denoted by $\pd q e$.

The following result of Zsigmondy \cite{Z} will be used frequently.
\begin{theorem}\label{Z}
Let $q\geq 2$ and $e\geq 2$.  There exists a primitive prime divisor of $q^e-1$ unless
\begin{enumerate}
\item [\rm{(i)}] $q=2^a-1$ is a Mersenne prime and $e=2$; or 
\item[\rm{(ii)}]  $(q,e)=(2,6)$.
\end{enumerate}
\end{theorem}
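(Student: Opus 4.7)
My plan is to prove Zsigmondy's theorem via the cyclotomic factorisation. Writing $q^e - 1 = \prod_{d \mid e} \Phi_d(q)$, where $\Phi_d$ denotes the $d$-th cyclotomic polynomial, a prime $r$ is a primitive prime divisor of $q^e-1$ precisely when $r \mid \Phi_e(q)$ and the multiplicative order of $q$ modulo $r$ equals $e$. Thus the theorem reduces to producing a prime factor of $\Phi_e(q)$ whose order is exactly $e$, i.e.\ one that does \emph{not} already divide $q^f - 1$ for any $f < e$.

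The heart of the argument is the following arithmetic lemma: if a prime $r$ divides $\Phi_e(q)$ but is not a primitive prime divisor of $q^e - 1$, then $r$ divides $e$ (in fact $r$ is the largest prime divisor of $e$), and for $e \geq 3$ one moreover has $v_r(\Phi_e(q)) = 1$. I would prove this by letting $f$ be the multiplicative order of $q$ modulo $r$, observing $f \mid e$ with $f < e$, and then using a lifting-the-exponent style computation on $v_r(q^e - 1) - v_r(q^f - 1)$ to force $r \mid e/f$. The case $r = 2$ needs separate attention because the LTE identity takes an exceptional form at the prime $2$; this anomaly is exactly what generates the Mersenne exception at $e = 2$.

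Given the lemma, if $q^e - 1$ has no primitive prime divisor then for $e \geq 3$ one concludes that $\Phi_e(q)$ equals the largest prime divisor of $e$, and in particular $\Phi_e(q) \leq e$. On the other hand, using the complex factorisation $\Phi_e(q) = \prod_{\zeta}(q - \zeta)$ over primitive $e$-th roots of unity $\zeta$, one checks that $|q - \zeta| > q - 1$ whenever $\zeta \neq 1$, whence $\Phi_e(q) > (q-1)^{\phi(e)}$. Combining these bounds gives $(q-1)^{\phi(e)} < e$, which is incompatible with $q \geq 2$ and $e \geq 3$ outside a very short finite list of pairs.

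The main obstacle is then the finite case analysis: running through the surviving $(q,e)$ with $e \geq 3$ to isolate $(q,e) = (2,6)$ as the only genuine exception -- indeed $\Phi_6(2) = 3$, and inspection of $2^6 - 1 = 3^2 \cdot 7$ confirms that the orders of $2$ modulo $3$ and modulo $7$ are $2$ and $3$ respectively, neither equal to $6$ -- and then separately handling $e = 2$, where $\Phi_2(q) = q + 1$ has no odd primitive prime divisor exactly when $q + 1$ is a power of $2$, i.e.\ $q = 2^a - 1$. Pinning this classification down cleanly, particularly the delicate boundary behaviour at the prime $2$, is the most error-prone step.
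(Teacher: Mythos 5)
You should first note that the paper does not prove this statement at all: it is quoted as Zsigmondy's theorem with a citation to \cite{Z}, so the only comparison available is with the standard literature proof, and your outline does follow that standard route. Your key lemma is correct as stated: if $r\mid\Phi_e(q)$ and the order $f$ of $q$ modulo $r$ is less than $e$, then $e/f$ is a power of $r$ (so $r\mid e$, and since $f\mid r-1$ the prime $r$ is the largest prime divisor of $e$), and for $e\geq 3$ one has $v_r(\Phi_e(q))=1$; the $r=2$ anomaly indeed accounts for the Mersenne exception at $e=2$, and your treatment of $e=2$ is fine.

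The genuine gap is in the finishing estimate. From the lemma you correctly conclude that, in the absence of a primitive prime divisor, $\Phi_e(q)$ equals $1$ or the largest prime divisor $r$ of $e$, hence $\Phi_e(q)\leq e$. But your lower bound $\Phi_e(q)>(q-1)^{\varphi(e)}$ is vacuous precisely when $q=2$: there $(q-1)^{\varphi(e)}=1$, so the inequality $(q-1)^{\varphi(e)}<e$ holds for \emph{every} $e$, and the ``very short finite list of pairs'' you invoke is in fact infinite exactly in the case that matters most here (the paper applies the theorem constantly with $q$ even, and the sole exception with $e\geq 3$ is $(q,e)=(2,6)$). The same weakness, in milder form, affects $q=3$, where $2^{\varphi(e)}<e$ still admits $e=6$ and must be checked by hand ($\Phi_6(3)=7$ is primitive). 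To close the argument you need a strictly stronger lower bound at small $q$, for instance via the M\"obius factorisation $\Phi_e(q)=\prod_{d\mid e}(q^d-1)^{\mu(e/d)}$, which gives $\Phi_e(q)\geq q^{\varphi(e)}\prod_{j\geq 1}(1-q^{-j})$ and hence $\Phi_e(2)>2^{\varphi(e)-2}$; only with such a bound does $2^{\varphi(e)-2}\leq e$ confine $q=2$ to finitely many exponents, after which the finite check isolates $(2,6)$. As written, your step from the two bounds to a finite case analysis simply fails for $q=2$, so the proof is incomplete without this sharpening.
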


Let $\G$ be a simple algebraic group over an algebraically closed field of positive 
characteristic.   If $F$ is an endomorphism of $\G$ with set of fixed points $G:=\G^F$
finite, then $G$ is said to be a finite group of Lie type.  In essentially all cases, 
the derived group of $G$ modulo its center is simple.  These are called the finite simple
groups of Lie type.   We will prove Theorem~~B for these groups (and so,
by the classification of finite simple groups and Propositions~\ref{sporadic}, complete the proof
of Theorem~B).   Indeed, we will assume that $\G$ is simply connected (for example, 
we will take $\G = \SL_n$ rather than $\PGL_n$).  Then $G$ will be a perfect group, and we will say also that $G$ is simply connected.
It suffices to prove the result in these simply connected cases.    

We refer the reader to   \cite{Ca1} or \cite{GLS3} for basic facts about these groups.  
Let $\T$ be an $F$-stable maximal torus of $\G$.   Then $T:=\T^F= \T \cap G$ is
called a maximal torus of $G$. The torus $T$ is said to be nondegenerate if $\T=C_{\G}(T)$.
Every semisimple element of $G$ is contained in a maximal nondegenerate torus
(and this will be obvious in the cases we require).  The Weyl group $W$ of $\G$ is
the normalizer of $\T$ modulo its centralizer.   If $T$ is a nondegenerate maximal torus of
$G$, then $N_G(T)/C_G(T)$ embeds in the Weyl group $W$ (see  \cite[Proposition 3.3.6]{Ca2}).

\begin{lemma} \label{control of fusion}   Let $G$ be a simply connected finite simple group of  Lie type.     Let $x \in G$ be a semisimple element of prime order $r$.  If $y \in G$ has prime order $s$ with $y$ normalizing $\gen{x}$, then either $xy=yx$ or $s$ divides $|W|$.
\end{lemma}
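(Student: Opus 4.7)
The plan is to replace $y$ by an element $w$ of $N_{\G}(\T)$ that induces the same action on $\gen x$ as $y$ does, and then to read off $s\mid |W|$ from the fact that $W$ acts faithfully on $\T$.

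First I would analyse the automorphism. Assuming $xy\neq yx$, the element $y$ acts nontrivially on the cyclic group $\gen x$ of prime order $r$ as $x\mapsto x^k$ for some integer $k$. Since $y$ has prime order $s$, this automorphism has order exactly $s$, so $k$ has multiplicative order $s$ modulo $r$, and in particular $s\mid r-1$.

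Next I would transport this action into the algebraic normalizer $N_{\G}(\T)$. Since $x$ is semisimple, it lies in some $F$-stable maximal torus $\T$ of $\G$. The conjugate torus $\T^y:=y^{-1}\T y$ contains $y^{-1}xy=x^k$, and therefore, because $r\nmid k$, also contains $x$. Hence $\T$ and $\T^y$ are both maximal tori of the connected reductive subgroup $\G_x:=C_{\G}(x)^\circ$, so by standard conjugacy of maximal tori in a connected group there exists $c\in\G_x$ with $\T^{yc}=\T$. Put $w:=yc\in N_{\G}(\T)$; since $c$ centralizes $x$, we have $x^w=x^k$.

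Finally I would extract the conclusion from the Weyl group. The image $\bar w$ of $w$ in $W=N_{\G}(\T)/\T$ acts faithfully on $\T$, and its restriction to $\gen x\subseteq\T$ is the order-$s$ automorphism $x\mapsto x^k$; so $s$ divides the order of $\bar w$ in $W$, and hence $s\mid |W|$. The one step requiring care is the torus-conjugacy move, where one needs $\T$ and $\T^y$ to sit inside, and be maximal in, a \emph{common} connected reductive subgroup; the observation that both contain $\gen x$ (so both lie in $C_{\G}(x)^\circ$) is exactly what makes this go through. After that, the argument is formal, and in particular does not invoke the classification of finite simple groups.
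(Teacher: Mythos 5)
Your proposal is correct and follows essentially the same route as the paper: both reduce to the fact that the two maximal tori containing $x$ (namely $\T$ and its $y$-conjugate) are conjugate inside the centralizer of $x$, producing an element $w\in N_{\G}(\T)$ inducing $x\mapsto x^k$, whose image in $W=N_{\G}(\T)/\T$ then has order divisible by $s$. The only cosmetic difference is that you work with $C_{\G}(x)^\circ$ and spell out the torus-conjugacy step, where the paper cites \cite[4.1.3]{GLS3} with the same observation in a parenthetical remark.
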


\begin{proof}   Suppose that $xy \ne yx$.  Then $yxy^{-1}=x^e$ where $e$ has multiplicative order $s$
modulo $r$.      Let $\T$ be a maximal torus of the algebraic group $\G$ such that
$G=\G^F$ and $x\in \T$.  Since 
$x$ and $x^e$ are conjugate, if follows that $x$ and $x^e$ are conjugate in $N_{\G}(\T)$ \cite[4.1.3]{GLS3}
(this is just the fact that all maximal tori in $C_\G(x)$ are conjugate in $C_\G(x)$).
Thus, $x^e = uxu^{-1}$ for some $u \in N_{\G}(\T)$.  It follows that the order of $u C_\G(\T) \in N_{\G}(\T)/C_{\G}(\T) \cong W$
is a multiple of $s$, whence $s$ divides $|W|$.
\end{proof}

\subsection{Finite classical groups}

We now consider the finite classical groups arising from the simply connected classical groups $\G$ 
(see \cite[Theorem 1.10.7 and pp. 69, 71]{GLS3}), namely  $\SL_n(q)$ ($n\geq2$), $\SU_n(q)$ ($n \ge 3$), $\Sp_{n}(q)$, ($n = 2m \ge 4$),
$\Spin_{n}(q)$ ($n\geq7$, $nq$ odd) or $\Spin^{\pm}_{n}(q)$ ($n \ge 8$ even).    
Let $G$ be one of these groups and let $V$ denote the natural module for $G$.  So $V$
is an $n$-dimensional vector space over $\F_q$ (or over $\F_{q^2}$ in the case of $\SU_n(q)$).  
In the case $G=\Spin_n(q)$,  $V$ will not be a faithful module.

The simple idea to prove Theorem~B is as follows.  We choose elements $x_1, x_2 \in G$ of prime orders $r_1$ and $r_2$
which leave invariant irreducible submodules of $V$ of dimensions $n_1, n_2$ respectively, where $n/2 < n_1 < n_2 \le n$.   Moreover, if possible
we take $n_2 - n_1 > 1$.    This is equivalent to saying that $r_1r_2$ is a divisor of $|G|$, and that each 
$r_i$ is a primitive prime divisor of $q^{n_i}-1$ 
(or of $q^{2n_i}-1$ if $G=\SU_n(q)$). In particular $r_i\equiv 1\pmod{n_i}$ so $r_i\geq n_i+1$. By considering the formula for the orders of the groups
and Zsigmondy's Theorem~\ref{Z}, this (with $n_2-n_1>1$) can always be done unless one of the following holds:

\begin{enumerate}
\item $G=\SL_n(q)$ with $n=2, 3$ or $4$;
\item $G=\SU_n(q)$ with $n =3, 4$ or $6$; 
\item $G=\Sp_4(q)$;
\item $G = \Spin^+_8(q)$; 
\item $G=\SL_6(2)$;
\item $G=\Sp_6(2)$ or $\Sp_8(2)$;  or 
\item $G= \Spin^-_8(2)$; 
\end{enumerate}

Let us exclude these cases for the moment.   
Then for each $i$, the Sylow $r_i$-groups   
of $G$ are cyclic (because a faithful module for any non-cyclic $r_i$-group would
have dimension at least $2n_i$ which is greater than $\dim V$).   By Lemma \ref{lem:solvable}, it suffices 
to prove that $G$ has no subgroup of order $r_1r_2$.  Suppose to the contrary that
$X = \gen{x_1, x_2}$ is a subgroup of order $r_1r_2$ with $|x_i|=r_i$.    Let $V_i = [x_i, V]$.  Then $x_i$
acts irreducibly on $V_i$.   Now $x_1$ and $x_2$ do not commute, for if did then 
$V_1 \cap V_2$ would be non-zero and invariant under each $x_i$, contradicting the fact that $x_i$ is irreducible on $V_i$.
Set $X_i := \gen{x_i}$ and note that one of $X_1$ or $X_2$ is normal in $X$.  Now 
$N_G(X_i)/C_G(X_i)$ is  cyclic of order dividing $n_i$ (or $2n_i$ in the case
of $\SU_n(q)$ -- to see this, work in $\GL(V)$). Since $r_2>n_2>n_1$, this implies that
$X_2$ is normal in $X$ and $r_1 | n_2$.   However $n_2>n_1>n/2$ and $r_1 \equiv 1 \mod n_1$ imply that $n_2<2n_1<2r_1$, whence $n_2=r_1=n_1+1$, contradicting $n_2 - n_1 > 1$.

\begin{equation}\label{genarg}
\begin{array}{l}
\mbox{Indeed, we note that the argument applies even for}\\
\mbox{$n_2 = n_1 + 1$ unless $n_2 = r_1$ is prime.}  \tag{*} 
\end{array}
\end{equation}

Therefore, to complete the proof of Theorem B for classical groups we are left with cases (1)--(7) above. For these groups, the following argument will help.  If $H$ is a subgroup of $G$, we call   $N_G(H)/C_G(H)$ the
\emph{automizer} of $H$ in $G$.
We recall that, if $G$ is finite simple group of Lie type 
in characteristic $p$, a   parabolic subgroup  of $G$ is
any subgroup which contains the normalizer of a Sylow $p$-subgroup of $G$. 
A prime $r$ does not divide the order of  any proper parabolic
subgroup of $G$ if and only if 
$r$ does not divide the order of the normalizer in $G$  of any
nontrivial $p$-subgroup of $G$ (see \cite[Theorem 3.1.3]{GLS3}). 

\begin{lemma} \label{parabolic}  Let $G$ be a finite simple group of Lie type 
in characteristic $p$.  
Let $r$ be a prime such that a  Sylow $r$-subgroup is cyclic and
$r$ does not divide the order of any parabolic subgroup of $G$.  Assume also
that $p$ does not divide the order of the automizer in $G$ of a cyclic group of order $r$.
Then $\gen{x,y}$ is nonsolvable for any $x,y\in G$ with $|x|=p$  and $|y|=r$.
\end{lemma}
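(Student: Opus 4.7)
The plan is to argue by contradiction: suppose $H := \gen{x,y}$ is solvable, and derive a contradiction from the three hypotheses. Since $H$ is solvable and $|H|$ is divisible by both $p$ and $r$ (as $x,y \in H$), I would extract a Hall $\{p,r\}$-subgroup $K$ of $H$, so that $K$ has nontrivial Sylow $p$- and Sylow $r$-subgroups. Then I would pick a minimal normal subgroup $N$ of $K$; by solvability of $K$, $N$ is elementary abelian and hence is either a $p$-group or an $r$-group, and the proof splits accordingly.

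In the $p$-case, $N$ is a nontrivial elementary abelian $p$-subgroup of $G$, normalized by all of $K$ and in particular by its nontrivial Sylow $r$-subgroup. Hence $r$ divides $|N_G(N)|$, which by the equivalence from \cite[Theorem 3.1.3]{GLS3} recalled immediately before the statement forces $r$ to divide the order of some proper parabolic subgroup of $G$, contradicting the parabolic hypothesis.

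In the $r$-case, the cyclic Sylow hypothesis forces $|N|=r$. Since $K/C_K(N)$ embeds in $\mathrm{Aut}(N)$ of order $r-1$ and $K$ is a $\{p,r\}$-group, $K/C_K(N)$ is a $p$-group. If it is nontrivial, then a $p$-element of $K$ induces a nontrivial automorphism of $N$, so $p$ divides the automizer $|N_G(N)/C_G(N)|$ of this cyclic subgroup of order $r$, contradicting the automizer hypothesis. Otherwise $N$ lies in the center of $K$, and then any element $u$ of order $p$ in $K$ commutes with a generator $z$ of $N$; in particular $z \in N_G(\gen{u})$, so $r$ divides the order of the normalizer of the nontrivial $p$-subgroup $\gen{u}$ of $G$, which contradicts the parabolic hypothesis via the same equivalence. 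The main conceptual step is spotting this dichotomy and seeing that the parabolic hypothesis must be applied twice — once to $N$ itself and once to $\gen{u}$ — while the automizer hypothesis bridges the gap by forcing centrality of $N$ in $K$; I do not foresee any calculational obstacle beyond this.
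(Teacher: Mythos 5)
Your proof is correct and takes essentially the same route as the paper's: the paper's proof picks a minimal solvable subgroup of order divisible by $pr$ and analyzes $\oh{p}{H}$ and $\oh{r}{H}$, using Borel--Tits (the parabolic hypothesis) when the $p$-part is normal and the automizer hypothesis when the $r$-part is normal, exactly mirroring your two cases. Your only variations --- working with a minimal normal subgroup of a Hall $\{p,r\}$-subgroup of $\gen{x,y}$, and invoking the parabolic hypothesis a second time on $\gen{u}$ in the central subcase instead of deriving the contradiction from $\oh{p}{H}=1$ --- are cosmetic.
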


\begin{proof} Suppose this is not the case, and let $H$ be a minimal solvable subgroup of $G$ of order
a multiple of $pr$. Then $H$ is a $\{p,r\}$-group.  If $O_p(H) \ne 1$, then it follows
from the Borel-Tits lemma \cite[3.1.3]{GLS3} that $H$ is contained in a parabolic subgroup, which is a contradiction. 
Hence $O_p(H)=1$ and so $O_r(H)\ne 1$. Since a Sylow $r$-subgroup is cyclic, $O_r(H)$ is cyclic, and an element $x\in H$ of order $p$ normalises the unique subgroup $\gen{y}$ of $O_r(H)$ of order $r$. By the minimality of $H$ it follows that $H=\gen{y}.\gen{x}$ of order $pr$. However, since $p$ does not divide the order of the automizer of $\gen{y}$ in $G$, it follows that $x$ centralises $y$ and so $O_p(H)=\gen{x}$, which is a contradiction.
\end{proof} 

We note that (recalling Theorem~\ref{Z}) the previous lemma applies to prove Theorem B when:
\begin{enumerate}
\item[($1'$)] $G=\SL_n(q)$ or $\Sp_n(q)$  as long as $p$ does not divide  $n$ and $(n,q)\ne (2,2^a-1)$;
\item[($2'$)]  $G=\SU_{2m}(q)$ or $SU_{2m-1}(q)$  as long as $p$ does not divide  $2m-1$;
\item[($3'$)]  $G=\Spin_{2m+1}(q)$ or $\Spin^-_{2m}(q)$ as long as $p$ does not divide $2m$; 
\item[($4'$)]  $G=\Spin^+_{2m}(q)$ as long $p$ does not divide $2m-2$;
\end{enumerate}
taking $r$ a primitive prime divisor of $q^n-1$, $q^{2(2m-1)}-1$, $q^{2m}-1$, and
$q^{2m-2}-1$, respectively, since in these cases $r$ does not divide the order of any parabolic subgroup of $G$. 
Comparing this list with the cases (1)--(7), we have now proved Theorem B for classical groups except in the following cases:

\[
 \begin{array}{lllllllll}\hline
\SL_n(q)&&(n,q) &=&(2,2^a),&(2,2^a-1),&(3,3^a),&(4,2^a),&(6,2)\\
\SU_n(q)&&(n,q) &=&(3,3^a),&(4,3^a),  &(6,5^a) &        &     \\
\Sp_n(q)&&(n,q) &=&(4,2^a),&(6,2),    &(8,2)   &        &     \\
\Spin_8^\ve(q)&&(\ve,q)&=&(+,2^a),&(+,3^a),&(-,2)&      &     \\
\hline
 \end{array}
\]
We now handle these special cases. For each group $G$ we choose distinct prime divisors $r_1, r_2$ of $|G|$ and show that $\gen{x_1,x_2}$ is nonsolvable whenever $x_1,x_2\in G$ with $|x_1|=r_1, |x_2|=r_2$ (modulo $Z(G)$). Often the Sylow $r_i$-subgroups are cyclic and we show that $G$ has no subgroups of order $r_1r_2$ and apply Lemma~\ref{lem:solvable}.\\ 

\par
\emph{$G=\SL_2(q)$ with $q=2^a \geq4$ or $q=2^a-1\geq7$.}   \\
If $q=2^a$, we can choose odd primes $r_1$ dividing $q-1$ and $r_2$ dividing
$q +1$.  The centralizers of the $x_i$ then have orders $q \pm 1$ and
their automizers have order $2$.   Thus, $G$ has no subgroups of order $r_1r_2$, while the Sylow $r_i$-subgroups are cyclic.  

If $q=2^a-1$ is a Mersenne prime, then we take $|x_1|=r_1=q$ and $x_2$ of order $4$
(so of order $2$ in the simple group). The only maximal subgroup of $G$ containing
$x_1$  has order $q(q-1)$ and so contains no element of order 4. Thus $\gen{x_1,x_2}=G$, and in particular is nonsolvable.\\

\par  \emph{$G=\SL_3(q)$ with $q=3^a\geq3$.}  \\
If $q>3$ then, by Theorem~\ref{Z}, $q^3-1$ and $q^2-1$ have primitive prime divisors $r_2, r_1$ respectively, and as noted in (\ref{genarg}) the general argument works, proving Theorem B for $q>3$.
If $q = 3$, we take $r_1=2$ and $r_2=13$.  The only maximal subgroup of order divisible by $13$ has order $39$, and hence $\gen{x_1,x_2}=G$.\\

 \par  \emph{$G=\SL_4(q)$ with $q=2^a\geq 2$.}  \\
Here  $q^3-1$ and $q^4-1$ have primitive prime divisors $r_1, r_2$ respectively, and by (\ref{genarg}) the general argument works, since $4$ is not prime.\\

  \par
 \emph{$G=\SL_6(2)$.}  \\
  Take $r_1=31$ and $r_2=7$.   There are no subgroups of order
  $7^a \cdot 31$ for any $a \geq1$, whence the result follows from Lemma~\ref{lem:solvable}. \\

\par  \emph{$G=\SU_3(q)$ with $q=3^a\geq3$.}  \\
If $q > 3$, then  $q^2-1$ and $q^6-1$ have primitive prime divisors $r_1, r_2$ respectively, and the Sylow $r_i$-subgroups are cyclic for $i=1,2$.  Since each $r_i > 3$,
$r_i$ does not divide the order of the Weyl group (which is $S_3$), and since
there are no elements of order $r_1r_2$, by Lemma~\ref{control of fusion} $G$ contains no subgroup of order $r_1r_2$ so the result follows from Lemma~\ref{lem:solvable}. 
If $q = 3$, we take $|x_1| = 4$ (so of order $2$ in the simple group) and 
$|x_2| = 7$. The maximal subgroups of $\SU_3(3)$ of order divisible by $7$ are 
isomorphic to $\SL_2(7)$, so as above $\gen{x_1, x_2} \simeq \SL_2(7)$.
\\
  
\par  \emph{$G=\SU_4(q)$ with $q=3^a\geq3$.}  \\
Here  $q^4-1$ and $q^6-1$ have primitive prime divisors $r_1, r_2$ respectively, each greater than $4$, and we argue exactly as for $\SU_3(q)$. \\

\par  \emph{$G=\SU_6(q)$ with $q=5^a\geq5$.}  \\
Here  $q^6-1$ and $q^{10}-1$ have primitive prime divisors $r_1, r_2$ respectively, each greater than $6$, and we argue exactly as for $\SU_3(q)$. \\

  \par
 \emph{$G=\Sp_4(q)$ with $q=2^a\geq2$.}  \\
The case $\Sp_4(2)'\cong A_6$ follows from Proposition~\ref{lem:alt}, so we assume $q\geq4$. Let $r_2$ be a primitive prime divisor of $q^4-1$, and note that $r_2\geq5$.   Then the Sylow $r_2$-subgroups are cyclic with normalizers of order $4(q^2+1)$. Let  $r_1=3$.   Then any $3$-subgroup of $G$ acts reducibly and so cannot be normalized
by a subgroup of prime order $r_2$. Thus $G$ has no subgroups of order $3r_2^b$ or $3^br_2$ (with $b>0$) and Lemma~\ref{lem:solvable} applies. \\

  \par \emph{$G=\Sp_6(2)$.}  \\
Take $r_1=5$ and $r_2=7$.   Both Sylow subgroups are cyclic
and there are no subgroups of order $35$, whence the result follows.  \\

\par \emph{$G=\Sp_8(2)$ or $\Spin^-_8(2)$.}  \\
Let $r_2 =17$ and $r_1=5$.  Then $G$ has no subgroups of order $r_1r_2^b$ or $r_1^br_2$ (with $b>0$) and Lemma~\ref{lem:solvable} applies.  \\

  \par
 \emph{$G=\Spin^+_8(q)$ with $q=2^a$ or $3^a$.}  \\
Here  $q^4-1$ has a primitive prime divisor $r_1$, and if $q>2$ then $q^6-1$ has a primitive prime divisor $r_2$; if $q=2$ take $r_2=7$. 
Then  each Sylow $r_1$-subgroup is abelian and is contained in a maximal torus of order $(q^2+1)^2$.   It follows that the automizer of any $r_1$-subgroup is a $2$-group.  In particular, the normalizer of any $r_1$-subgroup  contains no elements of order $r_2$.
Also, the Sylow $r_2$-subgroups are cyclic and, since $r_1 \ge 5$, $r_1$ does not divide the order of the Weyl group of $G$, whence by Lemma~\ref{control of fusion} the normalizer of any $r_2$-subgroup contains no elements of order $r_1$. Now the result follows from Lemma~\ref{lem:solvable}.\\
  
This completes the proof of the following proposition. 

\begin{proposition}\label{classical}
Theorem~B holds for the classical simple groups of Lie type.
\end{proposition}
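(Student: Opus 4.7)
The plan is to choose, for each simply connected classical group $G$ with natural module $V$ of dimension $n$, two distinct prime divisors $r_1, r_2$ of $|G|$ so that the Sylow $r_i$-subgroups are cyclic and $G$ contains no subgroup of order $r_1 r_2$. By Lemma \ref{lem:solvable}, this will force $\langle x, y \rangle$ to be nonsolvable whenever $|x|=r_1$ and $|y|=r_2$. The natural source for such primes is Zsigmondy's theorem (Theorem \ref{Z}): for integers $n/2 < n_1 < n_2 \le n$, I would take $r_i$ to be a primitive prime divisor of $q^{n_i}-1$ (respectively $q^{2n_i}-1$ in the unitary case). Any faithful module for a non-cyclic $r_i$-group has dimension at least $2n_i > n$, so the Sylow $r_i$-subgroup of $G$ must be cyclic.

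For the generic argument I would set $V_i = [x_i, V]$, on which $x_i$ acts irreducibly of dimension $n_i$. If $X = \langle x_1, x_2 \rangle$ had order $r_1 r_2$, then either $[x_1, x_2] = 1$, in which case $V_1 \cap V_2$ is a nonzero common invariant subspace contradicting irreducibility, or, without loss of generality, $\langle x_2 \rangle$ is normal in $X$. In the latter case the image of $\langle x_1 \rangle$ in the automizer $N_G(\langle x_2\rangle)/C_G(\langle x_2\rangle)$, which is cyclic of order dividing $n_2$ (or $2n_2$ for unitary groups), must be nontrivial, forcing $r_1 \mid n_2$. But $r_1 \equiv 1 \pmod{n_1}$ and $n_1 > n/2 \ge n_2/2$ then give $r_1 = n_2 = n_1+1$, which is ruled out whenever one can arrange $n_2 - n_1 \ge 2$, and also when $n_2 - n_1 = 1$ provided $n_2$ is composite.

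The remaining cases are the Zsigmondy exceptions ($(q,e)=(2,6)$ and $q=2^a-1$ Mersenne at $e=2$) and the small-dimensional families: $\SL_n(q)$ with $n \le 4$, $\SU_n(q)$ with $n \in \{3,4,6\}$, $\Sp_4(q)$, $\Spin_8^\pm(q)$, together with a few small groups such as $\SL_6(2)$, $\Sp_6(2)$, $\Sp_8(2)$. For these I would choose primes by hand, often taking one of the $r_i$ to be the defining characteristic $p$ and applying Lemma \ref{parabolic}. This requires verifying that the other prime $r$ has cyclic Sylow subgroup in $G$, does not divide the order of any proper parabolic subgroup, and that $p$ does not divide the automizer in $G$ of a cyclic subgroup of order $r$; under those hypotheses, the Borel--Tits theorem rules out any solvable $\{p,r\}$-subgroup. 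For the handful of truly smallest groups left over, the plan is to inspect maximal-subgroup tables directly and show that no maximal subgroup of order divisible by $r_1 r_2$ is solvable.

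The main obstacle will be this case analysis: each exceptional family demands an individual verification, and one must select primes with care. Particular delicacy is needed for $\Sp_4(q)$ with $q$ even, for $\Spin_8^\pm(q)$ (where triality and outer automorphisms complicate the computation of automizers and of the relevant Weyl-group embedding from Lemma \ref{control of fusion}), and for the Mersenne-prime cases of $\SL_2(q)$, because in these families the natural Zsigmondy primes may fail, coincide, or leave room for a small solvable overgroup. In each such case the plan is to argue directly from the classification of maximal subgroups of $G$ that no solvable subgroup realises both chosen prime orders.
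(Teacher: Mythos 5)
Your proposal follows essentially the same route as the paper: the generic primitive-prime-divisor argument with $n/2 < n_1 < n_2 \le n$, cyclic Sylow subgroups, the automizer bound forcing $r_1 = n_2 = n_1+1$ (with the same refinement when $n_2$ is composite), Lemma~\ref{lem:solvable} to conclude nonsolvability, Lemma~\ref{parabolic} with the defining characteristic for most of the residual families, and ad hoc prime choices plus maximal-subgroup or Weyl-group (Lemma~\ref{control of fusion}) arguments for the remaining exceptions such as $\Sp_4(2^a)$, $\Spin_8^{\pm}$, and $\SL_2$ of Mersenne degree. The only difference is one of detail: the paper disposes of the leftover infinite families (e.g.\ $\SU_3(3^a)$, $\SU_4(3^a)$, $\SU_6(5^a)$, $\Spin_8^+$) by pairing two primitive prime divisors with Lemma~\ref{control of fusion} rather than invoking the full maximal-subgroup classification you sketch, but your plan would also work.
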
 

\subsection{Finite exceptional groups}
We now turn our attention
to the exceptional groups.  Let $\Phi_m(t)$ denote the $m$th cyclotomic polynomial. 

The general strategy is as follows.   In all cases other than ${^3}D_4(q)$, we will choose two
maximal tori $T_1$ and $T_2$ and suitable primitive prime divisors $r_i$ of $|G|$ such that each $T_i$ is cyclic and contains a Sylow $r_i$-subgroup of $G$.   Moreover, we choose the $r_i$ so that neither $r_i$ divides the order of the Weyl group  (this is
not hard to arrange since  only the primes $2$ and $3$ divide the order of the Weyl group
unless the prime is $5$ and $G=E_n(q)$ or ${^2}E_6(q)$ or  the prime is $7$ and 
$G=E_7(q)$ or $E_8(q)$). 
Then Lemma \ref{lem:solvable} applies to give the result.     See  the Table  for this information
(we refer the reader to \cite[Table 6]{GM1} and \cite[Table 1]{GM2}). 

\begin{table}[htbp] 
  \caption{Cyclic Tori in Exceptional Groups}
  \label{tab:exctorus}
\[\begin{array}{|l||c|l|l}    
\hline
 G& |T_1|& |T_2|  \\
\skipa \hline \hline
 {^2}B_2(q^2),\ q^2\ge8& q^2+\sqrt{2}q+1& q^2-\sqrt{2}q+1   \\ \hline
 ^2G_2(q^2),\ q^2\ge27& q^2+\sqrt{3}q+1& q^2-\sqrt{3}q+1 \\ \hline
 G_2(q), & q^2+ q+1&  q^2-q+1   \\ \hline
 {^2}F_4(q^2),\ q^2\ge8&q^4+\sqrt{2}q^3+q^2+\sqrt{2}q+1& q^4-\sqrt{2}q^3+q^2-\sqrt{2}q+1 \\ \hline
 F_4(q),  &  q^4 - q^2 + 1  & \Phi_8(q)\\ \hline
       E_6(q)&  \Phi_9(q)   &   \Phi_3(q) \Phi_{12}(q)    \\ \hline
 {^2}E_6(q)&    \Phi_{18}(q)  &   \Phi_6(q) \Phi_{12}(q)       \\ \hline
       E_7(q)&  \Phi_2(q) \Phi_{18}(q)  &  \Phi_1(q) \Phi_9(q)    \\ \hline
  E_8(q)& \Phi_{30}(q) &  \Phi_{15} (q)   \\ \hline
\end{array}\]
\end{table}

Finally, consider ${^3}D_4(q)$.   Let $r_2$ be a divisor of $q^4 - q^2 + 1$ and $r_1$
a divisor of $\Phi_6(q)$ (both of which divide $|{^3}D_4(q)|$).   Then there is a cyclic
maximal torus $T_2$ of order $q^4- q^2 +1$ containing a Sylow $r_2$-group of $G$.  Indeed, $T_2$ is the centralizer of a Sylow $r_2$-subgroup. 
Similarly, there is a maximal subgroup $T_1$ of order $\Phi_6(q) (q^2-1)$ and this
contains a cyclic Sylow $r_1$-subgroup.  Since $r_2 \ge 13$ and $r_1 \ge 7$, we 
see there are no subgroups of order $r_1r_2$, whence Theorem~B holds.

Thus we have proved the following proposition.

\begin{proposition}\label{exceptional}
Theorem~B holds for the  exceptional simple groups of Lie type.
\end{proposition}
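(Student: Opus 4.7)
The plan is to execute the strategy already sketched in the text preceding the statement. For each exceptional simple group $G\neq{}^3D_4(q)$, I would pick from Table~\ref{tab:exctorus} two cyclic maximal tori $T_1,T_2$, together with primes $r_i\in\ppd(q,e_i)$, where $e_i$ is the cyclotomic index associated with $T_i$, chosen so that a Sylow $r_i$-subgroup of $G$ is contained in $T_i$. Cyclicity of $T_1,T_2$ and the Sylow containments are read off from \cite[Table~6]{GM1} and \cite[Table~1]{GM2}. I also need $r_i\nmid|W|$: since $|W|$ has only $\{2,3\}$ as prime divisors except when $G\in\{E_n(q),\,{}^2E_6(q)\}$ (where $5$ can appear) or $G\in\{E_7(q),E_8(q)\}$ (where $7$ can appear), and since every primitive prime divisor $r_i$ of $\Phi_{e_i}(q)$ satisfies $r_i\equiv 1\pmod{e_i}$ and thus exceeds the maximum prime in $|W|$ for the relevant $e_i$, this separation is automatic in each of the listed families (Zsigmondy, Theorem~\ref{Z}, guarantees the existence of such $r_i$).

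Given these choices, I would next show that $G$ has no subgroup of order $r_1r_2$. Suppose $H\le G$ has $|H|=r_1r_2$ with generators $x,y$ of orders $r_1,r_2$. Since Sylow $r_i$-subgroups of $G$ are cyclic, one of $\langle x\rangle$ or $\langle y\rangle$ is normal in $H$; say $y$ normalises $\langle x\rangle$. Because $r_2\nmid|W|$, Lemma~\ref{control of fusion} forces $[x,y]=1$, so $H$ is cyclic of order $r_1r_2$. Then $H$ is semisimple and lies in a single $F$-stable maximal torus $T$ of $G$, forcing $r_1r_2\mid|T|$. Inspecting the known orders of $F$-stable maximal tori of each exceptional group as products of values $\Phi_e(q)$ (see \cite{GM1}), one checks for the choices $e_1\neq e_2$ coming from Table~\ref{tab:exctorus} that no single torus is divisible by both $r_1$ and $r_2$. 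Hence no such $H$ exists, and Lemma~\ref{lem:solvable} implies that $\langle x,y\rangle$ is nonsolvable for every $x,y\in G$ with $|x|=r_1,\,|y|=r_2$.

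For $G={}^3D_4(q)$, I would take $r_2\mid\Phi_{12}(q)=q^4-q^2+1$ and $r_1\mid\Phi_6(q)$; both exist by Theorem~\ref{Z} and, since $r_i\equiv 1\pmod{e_i}$, they satisfy $r_2\geq 13$ and $r_1\geq 7$, so neither divides $|W|$. The Sylow $r_2$-subgroup lies in the cyclic torus of order $q^4-q^2+1$, while the Sylow $r_1$-subgroup lies in the subgroup of order $\Phi_6(q)(q^2-1)$ and is cyclic, since $r_1\nmid q^2-1$. The same argument as in the previous paragraph rules out any subgroup of order $r_1r_2$, and Lemma~\ref{lem:solvable} closes the case.

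The only real obstacle in carrying out this plan is the case-by-case verification that no single $F$-stable maximal torus of $G$ is simultaneously divisible by both chosen primes. This is not conceptually hard: it reduces to the elementary observation that, for $r_i\in\ppd(q,e_i)$ with $e_1\neq e_2$, the cyclotomic factor $\Phi_{e_i}(q)$ is the unique factor of $|G|$ receiving the prime $r_i$, so one merely needs to confirm that the torus tables in \cite{GM1} never pair the two required cyclotomic factors in a single torus order for the $(e_1,e_2)$ selected from Table~\ref{tab:exctorus}. The verification is routine but must be repeated across the nine families of exceptional groups of Lie type.
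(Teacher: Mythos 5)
Your proposal takes essentially the same route as the paper: the cyclic maximal tori of Table~\ref{tab:exctorus}, primitive prime divisors $r_1,r_2$ chosen so that neither divides $|W|$, Lemma~\ref{control of fusion} to rule out non-commuting pairs, Lemma~\ref{lem:solvable} to conclude nonsolvability, and the identical choices $r_1\mid\Phi_6(q)$, $r_2\mid q^4-q^2+1$ for ${}^3D_4(q)$. The only difference is that you spell out (and defer as a routine table check) the argument that a commuting pair would yield a semisimple element of order $r_1r_2$ inside a maximal torus whose order no torus can have, a step the paper leaves implicit in its references to the Guralnick--Malle torus tables, so the level of detail matches the paper's own proof.
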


We are ready now for the proof of Theorem B.

\begin{proof}[Proof of Theorem B] By the classification of the finite nonabelian simple groups,
they belong to one of the following classes: the alternating simple groups,
the sporadic simple groups, the classical simple groups of Lie type and the exceptional
simple groups of Lie type. Thus Theorem B follows from Proposition~\ref{sporadic},
Proposition~\ref{classical} and Proposition~\ref{exceptional}.
\end{proof}

\section{Proofs of Theorems A and C}  \label{sec:ABCD}

We will see that Theorem~A follows from Theorem~B .   

\begin{proof}[Proof of Theorem A]   The implications (1) $\Rightarrow$ (2), and (2) $\Rightarrow$ (3) are obvious. We prove that (3) $\Rightarrow$ (1). Thus let $G$ be a finite group such that, if $x, y \in G$ have prime power order, then $\gen{x,y^g}$ is solvable for some $g\in G$. We need to prove that $G$ is solvable. Suppose that this is not the case, and 
let $G$ be a minimal counterexample. 
Let $N$ be a minimal normal subgroup of $G$.   Note that if $xN \in G/N$ has prime
power order, we can replace $x$ by a power of itself and assume that $x$ also has prime power order.
Thus, $G/N$ also satisfies hypothesis (3).  By the minimality of $G$ it follows that $G/N$ is solvable. Thus, since $G$ is nonsolvable, $N$ is a nonsolvable minimal normal subgroup and so $N = L_1 \times \ldots \times L_t\cong L^t$ for some nonabelian simple group $L$ and $t\geq1$. By Theorem B there exist distinct primes $p, q$ dividing $|L|$
such that $\gen{x_1,y_1}$ is nonsolvable for all $x_1,y_1 \in L_1$ of orders $p$ and $q$ respectively.  
Let $x=(x_1, \ldots, x_t) \in N$ and $y=(y_1, \ldots, y_t) \in N$ with each $x_i$ of order $p$ and each 
$y_i$ of order $q$.   If $g \in G$, then $\gen{x,y^g}$ is a subgroup of $N$ for which the projection to each 
direct factor $L_i$ of $N$ is a subgroup $\gen{x_i,y_i^g}$ with $|x_i|=p, |y_i^g|=q$, and hence is nonsolvable.  In particular, $\gen{x,y^g}$ is nonsolvable, a contradiction.
\end{proof}

\begin{proof} [Proof of Corollary D]    This uses standard reductions for linear groups. 
Let $G\leq\GL_n(K)$ be  a  finitely generated linear group for some $n$ and field $K$.
Note that, by a theorem of Lie, Kolchin and Mal'cev \cite[15.1.1]{Ro}, a solvable subgroup of $\GL_n(K)$ has derived length at most $f(n)$ for some function $f$.

Suppose now that, for all $x,y\in G$ there exists $g\in G$ such that $\gen{x,y^g}$ is solvable, but that $G$ is not solvable.   Then the $(f(n)+1)$th term in the derived series for $G$ contains a non-identity element, say $w$.
Let $R$ be a finitely generated subring of $K$ containing all the matrix entries of the generators for $G$ as well
as the inverses of the nonzero entries of $w - I$.   Let $M$ be a maximal ideal of $R$.  By the Nullstellensatz, $R/M$
is a finite field. Each element of $G$ has entries in $R$, and reducing entries modulo $M$ defines a homomorphism $\varphi$ from $G$ into $\GL_n(R/M)$.  By construction, the image $\varphi(w)$ is nontrivial in $\GL_n(R/M)$.  By Theorem~A,  the image 
$\varphi(G)$ in $\GL_n(R/M)$ is solvable, whence $\varphi(G)$ has derived length at most $f(n)$.  This however implies that $\varphi(w)$ is trivial, which is a contradiction.  
\end{proof}

We now prove Corollarys E and F.  We give two proofs of Corollary E. 
Clearly a nilpotent group has the stated property, so we assume that, for each pair $p$ and $q$ of distinct primes, 
and for all $p$-elements $x$ and $q$-elements $y$ in a finite group $G$, $x$ and $y^g$ commute for some $g \in G$. Moreover we suppose inductively that every group of smaller order with this property is nilpotent.

\begin{proof}[Proof 1 of Corollary E] 
It follows that, if $x,y\in G$ have prime power order, then
$\gen{x,y^g}$ is solvable for some $g \in G$. (This is true if $|x|$ and $|y|$ are powers of the same prime by Sylow's Theorem.) Thus, Theorem~A implies that $G$ is solvable.  Let $N$
be a minimal normal subgroup of $G$. Then $N$ is an elementary abelian $p$-group, for some prime $p$.   By induction, $G/N$ is nilpotent. We claim that $N$ is central in $G$. Note that this implies that $G$ is nilpotent, as required. Suppose to the contrary that $N$ is not central, and choose $z \in G$ of prime power order
with $zC_G(N)  \in Z(G/C_G(N))$ of prime order
$q$.   Since
$N$ is minimal, $C_N(z)=1$ (since it is normal and properly contained in $N$).   Thus, $q \ne p$ and $z$ is a $q$-element.   Similarly, $C_N(z^g)=1$ for all $g\in G$ and so, for $x\in N$ of order $p$, $z^g$ does not commute with $x$ for any $g \in G$. This contradiction proves the claim, and completes the proof.  
\end{proof}

 \begin{proof}[Proof 2 of Corollary E]     Let $P$ be a nontrivial Sylow $p$-subgroup of $G$.  Let
 $z \in Z(P)$ and set $C=C_G(z)$.   If $C=G$ then, by induction, $G/\gen{z}$ is nilpotent, and hence also $G$ is nilpotent, as in the first proof. So we may assume that $C\ne G$.
Then, by \cite{FKS} applied to the transitive action of $G$ on the cosets of $C$, there exists $y \in G$ of prime power order that is not conjugate to any element of $C$. Since $z\in Z(P)$, this means that $y$ is a $q$-element with $q\ne p$, and  this contradicts the hypotheses.
 \end{proof}
 
 \begin{proof}[Proof of Corollary F]  If $G$ is solvable, the result follows from the existence and conjugacy of Hall subgroups.  
 If $G$ is not solvable, then by Theorem A, there exist primes $p$ and $q$, and a $p$-element $x$ and a $q$-element $y$,
 such that $\gen{x,y^g}$ is not solvable for all $g \in G$.  Thus,  $\gen{x,y^g}$ has order divisible by at least $3$ primes
 and so it is not a $\{p,q\}$-group.
 \end{proof}

To prove Theorem C we recall the result of Guralnick-Malle \cite[Theorem 7.1]{GM2}:

\begin{theorem} [Guralnick-Malle] \label{thm:GM}  Let $G$ be a finite almost simple group
with socle $S$.  Then there exist conjugacy classes $C$ and $D$ of $G$
such that $C  \subset S$ and if $(x,y) \in C \times D$, then
$\gen{x,y} \ge S$.
\end{theorem}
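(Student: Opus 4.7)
My plan is to reduce via the classification of finite simple groups to each family of almost simple groups $G$ with socle $S$, and for each family exhibit explicit conjugacy classes $C \subset S$ and $D$ of $G$ realising the generation property. The template is: choose $x \in S$ so that the proper overgroups of $\gen{x}$ in $G$ are very restricted, then choose the class $D$ so that no $G$-conjugate of $y$ lies simultaneously with $x$ in any such overgroup; the containment $\gen{x,y^g} \supseteq S$ is then forced.

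For the alternating groups $S = A_n$ (so $G = A_n$ or $S_n$), I would take $C$ to be the class in $A_n$ of an element with a long prime cycle — for instance an $n$-cycle when $n$ is odd, or an $r$-cycle with $r$ a prime satisfying $n/2 < r \le n-3$ otherwise — and $D$ the class of a transposition or $3$-cycle, adjusted depending on whether $G$ equals $A_n$ or $S_n$. The maximal overgroups of $\gen{x}$ in $S_n$ are either intransitive, imprimitive, or primitive; the first two are ruled out by the cycle structure, and a classical theorem of Jordan collapses the primitive option to $A_n$ or $S_n$ itself. The small values of $n$ are handled by direct inspection. For the sporadic groups (and $\,{}^2F_4(2)'$) I would simply read $C$ and $D$ off the \textsc{Atlas}, using the maximal subgroup lists to check the generation property case by case.

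For the groups of Lie type — the main case — the strategy is to pick $x \in S$ of order $r$, where $r$ is a primitive prime divisor of $q^d-1$ for a $d$ associated with a large cyclic maximal torus $T$ of $S$ (typically $T$ of Coxeter type, or $T$ corresponding to $\Phi_d(q)$ with $d$ maximal among degrees for which Zsigmondy applies). Then $C_S(x) = T$, the Sylow $r$-subgroup is cyclic, and combining Zsigmondy's theorem with known classifications of subgroups of $S$ of order divisible by $r$ (as collected, for instance, in the tables underlying the work of Guralnick--Penttila--Praeger--Saxl) restricts the maximal subgroups of $G$ containing $x$ to a very short list: typically $N_G(T)$, together with possibly a subfield subgroup, a reducible subgroup of small index (in the classical case), or one of the finitely many exceptional maximal subgroups listed by Aschbacher's theorem for the classical groups and by Liebeck--Seitz--Testerman for the exceptional ones. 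The class $D$ is then chosen so that $y$ avoids each candidate overgroup; for example one can take $y$ of order a second, independent primitive prime divisor $s$, so that Lemma~\ref{control of fusion} style arguments combined with the short overgroup list leave $\gen{x,y} \supseteq S$ as the only possibility.

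The main obstacle I expect is the finer analysis forced by the constraint $C \subset S$ together with $G \ne S$. This asymmetry means $x$ cannot absorb any outer twist: the generation of $S$ must be driven by $y$ alone, so when listing the maximal subgroups of $G$ (not just of $S$) containing $x$, one must track all diagonal, field, and graph automorphism extensions, and verify that $D$ controls the non-inner cosets. Running parallel with this, the small-rank and small-field configurations where Zsigmondy primes fail, and the groups with exceptional outer automorphism structure — most notably $\POm_8^+(q)$ with its triality — will each demand ad hoc treatment, analogous to the exceptional sublists that appeared in the classical-group analysis of Section~\ref{sec:Lie}. Once these boundary cases are dispatched, the uniform argument above delivers the theorem.
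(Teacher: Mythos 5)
You should first be aware that this paper does not prove Theorem~\ref{thm:GM} at all: it is quoted from Guralnick and Malle \cite{GM2} (Theorem~7.1 there) and used as a black box to derive Theorem~C. So there is no internal argument to compare with; what you have written is, in effect, an outline of the program carried out in that separate paper, and at the level of detail given it remains a plan rather than a proof. The decisive steps --- compiling, for each family, the actual list of maximal subgroups of $G$ (not just of $S$) meeting the chosen class $C$, and then exhibiting a class $D$ that avoids every member of that list, including the non-inner cosets, parabolic, subfield and reducible subgroups --- are precisely where all the work lies, and they are only gestured at.

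There is also a concrete point at which your default construction fails. You propose, as the generic device, to take both $x$ and $y$ of prime order equal to independent primitive prime divisors. The paper's Example~\ref{ex1} is included exactly to show this cannot work in general: for $G=\OO^+(8,2)$ the only primes are $2,3,5,7$; elements of order $2$ or $3$ always have a conjugate generating a solvable group with any given element, and there are elements $x_1,x_2$ of orders $5$ and $7$ with $\gen{x_1,x_2}=2^6.A_8$, so no choice of prime-order classes $C,D$ yields $\gen{x,y}\ge S$ for all pairs. Hence in general the class $D$ must be allowed to consist of elements of composite order (regular unipotent or regular semisimple elements of non-prime order, as Guralnick and Malle in fact use), and your sketch neither provides for this systematically nor explains how the ``ad hoc treatment'' would identify and dispatch such configurations. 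Until the overgroup analysis is actually executed family by family with such choices, the proposal is a reasonable research outline of \cite{GM2} but not a proof of the theorem.
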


We deduce Theorem~C from this result.     
The proof is similar to the proof of Theorem A 
but a bit more complicated.

\begin{proof}[Proof of Theorem C]
Let $\mathcal{X}$ be a family of finite groups closed under forming    subgroups, 
quotient groups and extensions.  Let $G$ be a minimal counterexample to 
Theorem~C with respect
to the family $\mathcal{X}$. Thus $G\not\in\mathcal{X}$ (in particular $G\ne1$)
and, for any conjugacy classes $C$ and $D$ of
$G$,  there exists $(x,y) \in C \times D$ such that  $\gen{x,y}\in\mathcal{X}$. 

Note that this means in particular that $G\not\cong Z_p$ for a prime $p$. 
Also, since $G\not\in\mathcal{X}$ it follows from Theorem~\ref{thm:GM} that $G$ is not a nonabelian simple group. Thus $G$ is not simple. 
Let $N$ be a minimal normal subgroup of $G$.   Since the hypothesis still holds
for $G/N$, we see that $G/N\in \mathcal{X}$.  If $N$ were in $\mathcal{X}$
as well, then since $\mathcal{X}$ is closed under extensions, $G$ would also lie in $\mathcal{X}$, which is not the case.  
Hence $N\not\in\mathcal{X}$.  Suppose next that $N$ is an elementary abelian $p$-group
for some prime $p$. Then taking $C$ and $D$ to be $G$-conjugacy classes 
contained in $N$, we find that $\mathcal{X}$ contains a nontrivial 
$p$-group, and hence contains $N$, which is a contradiction. 

Thus, $N = L_1 \times \ldots, \times L_t\cong L^t$ for some nonabelian
simple group $L$ and $t\geq1$. Since $N\not\in\mathcal{X}$ we note that also $L\not\in\mathcal{X}$.   Let $H=N_G(L_1)$ and $\bar{H}=N_G(L_1)/C_G(L_1)$. 
Then $\bar{H}$ is almost simple with socle $\bar L_1\cong L$. 
 By Theorem \ref{thm:GM}, there exist
conjugacy classes $\bar C_1$ and $\bar D_1$ of $\bar{H}$ such that $\bar C_1 \subset \bar L_1$
and every pair in $\bar C_1 \times\bar D_1$ generates a subgroup containing $\bar L_1$.
Let $\Omega = \{u_1, \ldots, u_t\}$ denote a set of left coset representatives for $H$ in $G$, with $u_1=1$. Let $C_1$ be an $H$-conjugacy class contained in $L_1$ which projects to $\bar C_1$ modulo $C_G(L_1)$, and let $C =\{ \prod_i^t  c_i^{u_i}  | c_i \in C_1\ \mbox{for all $i$} \}$.  Then $C$ is a conjugacy class of $G$ and $C\subset N$.
Let $d \in H$ be the lift of an element of $\bar D_1$ and set $D=d^G$.

We claim that if $(x,y) \in C \times D$, then $\gen{x,y}$ is not in $\mathcal{X}$.
This claim gives a contradiction and completes the proof. 

Let $(x,y) \in C \times D$. Conjugating $x,y$ by the same element of $G$ we may assume that $y = d \in H$.   Now 
$ x = \prod_i^t  c_i^{u_i} $ with each $c_i \in C_1$.   Then, in $\bar{H}$, 
the image $\bar y$ of $y$ is in $D_1$ and the image $\bar x$ of $x$ is $\bar c_1\in \bar C_1$. Thus, $\gen{\bar x, \bar y}$ contains $\bar L_1$ and so is not in $\mathcal{X}$, whence also $\gen{x,y}\not\in\mathcal{X}$,  proving
the claim and the theorem. 
\end{proof}

\section{Further Remarks and Examples}

When considering the two results Theorem B and Theorem~\ref{thm:GM}, it is natural to ask whether the classes $C$ and $D$ of Theorem~\ref{thm:GM} can be chosen to consist of elements of prime order; and to ask whether the condition ``$\gen{x,y}$ is nonsolvable'' in Theorem B can be replaced by ``$\gen{x,y}=G$''.  
Examples~\ref{ex1} and~\ref{ex2} demonstrate that this cannot be done in general. 
In particular,
example~\ref{ex2} shows that there are infinitely many finite simple groups of Lie type
for which no choice of primes a,b gives the stronger ``generation result".

\begin{example}\label{ex1} \rm{Let $G=\OO^+(8,2)$.   The only primes dividing $|G|$
are $2,3,5$ and $7$.    If $r =2$ or $3$, then it is easy to see that if
$x$ has order $r$ and the order of $y$ is any prime, then $\gen{x,y^g}$ is solvable
for some $g$. Also, if $r_1=5$ and $r_2=7$, then there exist $x_i$ of order $r_i$
such that $\gen{x_1, x_2} = 2^6.A_8$}.
\end{example}

\begin{example}\label{ex2}  \rm{Let $G=\Sp_{4m}(q)$, with $q=2^f\geq2$, and let $H=\OO^-_{4m}(q)<G$.
Then any prime dividing $|G|$ also divides $|H|$.  Thus, for any pair of prime divisors of $|G|$, there are elements $x,y$ of these prime orders such that $\gen{x,y}\leq H$.} \end{example}

Note that $A_5 < A_6$ is another example ($S_6 \cong \Sp_4(2)$ so this actually fits into
the previous family).     Similarly, if $n$ is not prime, then $A_{n-1}$ and $A_n$ have
precisely the same prime divisors.  

We make the following conjecture.

\begin{conjecture}  With finitely many exceptions, if $G$ is a finite simple group of Lie type, then there exist conjugacy classes $C$ and $D$ of $G$ consisting
of elements of prime order such that $\gen{x,y} =G$ for any $(x,y) \in C \times D$.
\end{conjecture}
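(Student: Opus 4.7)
The plan is to strengthen Theorem~B by a finer analysis of the possible maximal overgroups of prime-order elements, along the lines of the proof of Theorem~\ref{thm:GM}. For each finite simple group $G$ of Lie type over $\F_q$, I would choose two integers $d_1<d_2$ and primitive prime divisors $r_1\in\pd{q}{d_1}$, $r_2\in\pd{q}{d_2}$ (with the obvious modification for unitary groups) so that each Sylow $r_i$-subgroup is cyclic and lies in a self-centralizing maximal torus $T_i$ whose $G$-normalizer controls the fusion of $r_i$-elements. I would then select specific $G$-conjugacy classes $C$ and $D$ of $r_1$- and $r_2$-elements and prove that no maximal subgroup of $G$ simultaneously contains a representative of $C$ and one of $D$; the conclusion $\gen{x,y}=G$ for all $(x,y)\in C\times D$ is then immediate.

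For exceptional groups the analysis should be essentially a refinement of the computation summarised in Table~\ref{tab:exctorus}. Using the Liebeck--Seitz classification of maximal subgroups together with the results of \cite{GM1, GM2}, one verifies that, for appropriate $d_i$, the maximal overgroups of an $r_i$-element are confined to $N_G(T_i)$ and at most a handful of small exceptional subgroups; a direct order comparison then eliminates the possibility of a maximal subgroup containing both an $r_1$- and an $r_2$-element. For classical groups, the principal tool is the Guralnick--Penttila--Praeger--Saxl classification of irreducible subgroups of $\GL_n(q)$ containing a primitive prime divisor element. Taking $d_1,d_2$ in the range $n/2<d_1<d_2\le n$ (with the analogous ranges for the other classical types), the maximal subgroups containing an $r_i$-element fall into a short list: normalizers of maximal tori, subfield and extension-field subgroups, and a few reducible subspace-stabilisers. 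An asymmetric choice of classes, picking $x\in C$ and $y\in D$ with irreducible actions of incompatible dimensions on the natural module, should then exclude every common maximal overgroup.

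The main obstacle is the infinite family of formed-space embedding chains such as $\OO^-_{4m}(q)<\Sp_{4m}(q)$ in characteristic~$2$ (Example~\ref{ex2}), together with tensor-product and subfield embeddings in other Aschbacher classes. In such an embedding $H<G$ the subgroup $H$ shares the set of prime divisors of $G$, and in the typical case every element of $G$ of the relevant ppd order $r_1$ lies in some conjugate of $H$; thus the argument cannot rely on prime-divisor data of orders alone. One must instead exploit the finer fusion data: for each $G$-class of $r_1$-elements, compute its intersection with (each conjugate of) $H$, and then choose $C$ and $D$ so that no single conjugate of $H$ meets both. Making this refinement uniform across all families, and isolating the genuine exceptions (which one expects to be confined to small rank and small $q$), is the principal technical hurdle and accounts for the phrase ``finitely many exceptions'' in the conjecture.
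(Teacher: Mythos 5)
The statement you are addressing is not proved in the paper at all: it is stated as an open \emph{conjecture}, and the authors offer only the remark (following Example~\ref{ex2}) that the family $\OO^-_{4m}(q)<\Sp_{4m}(q)$ does not contradict it, because one can choose an element of order a primitive prime divisor of $q^{2m}-1$ acting with trivial fixed space, hence not conjugate into the subgroup. Your proposal is therefore not being measured against a proof in the paper, and, as written, it is not a proof either: it is a research plan. Its central claims --- that for suitable $d_1<d_2$ the maximal overgroups of $r_i$-elements are ``confined'' to torus normalizers and a short list of subfield, extension-field, tensor and subspace subgroups, and that an ``asymmetric choice of classes'' then excludes every common maximal overgroup --- are exactly the content that would have to be established, and you assert them (``one verifies'', ``should then exclude'', ``in the typical case'') rather than prove them. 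The known classifications of ppd-element overgroups give you a list of candidate subgroups, but ruling out each candidate for \emph{specific} classes $C$ and $D$, uniformly in $n$ and $q$, is the whole difficulty; nothing in the proposal carries that out even for a single classical family.

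Moreover, you explicitly defer the hardest point: for embeddings such as $\OO^-_{4m}(q)<\Sp_{4m}(q)$ (and subfield or tensor-decomposable subgroups sharing all prime divisors with $G$), you say one ``must instead exploit the finer fusion data'' and that making this uniform ``is the principal technical hurdle.'' That is precisely the gap between the conjecture and a theorem, and it coincides with the authors' own one-line suggestion after Example~\ref{ex2} (choose a class whose elements have trivial fixed space and so do not fuse into the subgroup). So your strategy is the natural one and is consistent with what the paper hints at, but no step of it is completed: no explicit choice of $(d_1,d_2)$ and classes is made for any family, no verification that the chosen classes avoid every relevant maximal subgroup is given, and the ``finitely many exceptions'' are not identified. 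As it stands the conjecture remains unproved by your argument, just as it is left unproved in the paper.
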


Note that the infinite family of groups in Example~\ref{ex2} does not contradict the conjecture.   Namely, let
$r_2$ be a primitive prime divisor of $q^{4m}-1$ and let $r_1$ be a primitive
prime divisor of $q^{2m}-1$.   Let  $x_1$ be an element of order 
$r_1$ with trivial fixed space on the natural $G$-module.   Then $x_1$ is not conjugate to an element of $H$.
Let $x_2$ be an element of order $r_2$. 
It is easy to see that generically,  $\gen{x_1, x_2^g}=G$ for all $g \in G$.

The conjecture does fail for $A_n$ (the smallest counterexample with $n > 6$ is $n=210$),
but it does seem likely to hold for a density $1$ subset (one can prove that it is true
for a subset of positive density).  
We do not know 
an example of an alternating
group which cannot be generated by each pair in $C \times D$, 
for some conjugacy classes $C$ and $D$ 
consisting of elements of prime power order.


\end{document}